\newtheorem{thm}{Theorem}[section]
\newtheorem{lemma}[thm]{Lemma}
\newtheorem{rem}[thm]{Remark}
\newtheorem{prop}[thm]{Proposition}
\newtheorem{cor}[thm]{Corollary}
\newcommand{\tmb}{Tree-decorated map with a simple boundary }
\def\cro#1{\llbracket#1\rrbracket}
\newcommand{\sm}{\mathfrak{sm}}
\newcommand{\cc}{\mathfrak{c}}
\newcommand{\mm}{\mathfrak{m}}
\renewcommand{\tt}{\mathfrak{t}}
\newcommand{\ff}{\mathfrak{f}}
\newcommand{\R}{\mathbb R}
\newcommand{\N}{\mathbb N}
\renewcommand{\S}{\mathbf S}
\newcommand{\BBB}{\mathcal B}
\newcommand{\SSS}{\mathcal S}
\newcommand{\bbb}{b}
\newcommand{\sss}{s}
\newcommand{\C}{\mathcal{C}}
\newcommand{\F}[1]{\mathsf{F}_{#1}}
\newcommand{\T}[1]{\mathsf{T}_{#1}}
\newcommand{\PPT}[1]{\mathsf{T}_{#1}}
\newcommand{\M}{\mathsf{M}}
\newcommand{\MM}[1]{\mathsf{M}_{#1}}
\newcommand{\DSTM}[1]{\mathring{\mathsf{M}}^{\mathsf{ST}}_{#1}}
\newcommand{\DSTMA}[1]{\mathsf{M}^{\mathsf{ST}}_{#1}}
\newcommand{\DTM}[2]{\mathring{\mathsf{M}}^{\mathsf{T},#2}_{#1}}
\newcommand{\DTMA}[2]{\mathsf{M}^{\mathsf{T},#2}_{#1}}
\newcommand{\TMSB}[3]{\mathsf{B}^{\mathsf{T},#3}_{#1,#2}}
\newcommand{\MMSB}[2]{\mathsf{MSB}_{#1,#2}}
\newcommand{\DFM}[2]{\mathring{\mathsf{M}}^{\mathsf{F},#2}_{#1}}
\newcommand{\DFMA}[2]{\mathsf{M}^{\mathsf{F},#2}_{#1}}
\newcommand{\DFMAG}[2]{\tilde{\mathsf{M}}^{\mathsf{F},#2}_{#1}}
\newcommand{\DSFMA}[2]{\mathsf{M}^{\mathsf{SF},#2}_{#1}}
\newcommand{\MB}[2]{\mathsf{B}_{#1,#2}}
\newcommand{\MSB}[2]{\mathsf{SB}_{#1,#2}}
\newcommand{\PC}[2]{\mathring{\mathsf{M}}^{\mathsf{C,#2}}_{#1}}
\newcommand{\PCE}[2]{\mathring{\mathsf{M}}^{\mathsf{C,#2}}_{E=#1}}
\newcommand{\PCAE}[2]{\mathsf{M}^{\mathsf{C},#2}_{E=#1}}
\g@addto@macro{\endabstract}{\@setabstract}
\newcommand{\authorfootnotes}{\renewcommand\thefootnote{\@fnsymbol\c@footnote}}
\let \epsilon \varepsilon
\numberwithin{equation}{section}
	\title{Tree-decorated planar maps}
	\author{Luis Fredes$^*$}
	\address{$^*$Université de Bordeaux, LaBRI.}
	\author{Avelio Sepúlveda$^\dagger$}
	\address{$^\dagger$Univ Lyon, Universit\'e Claude Bernard Lyon 1, CNRS UMR 5208, Institut Camille Jordan, 69622 Villeurbanne, France.}
\begin{document}
	\maketitle
\begin{abstract}
We introduce the set of (non-spanning) tree-decorated planar maps, and show that they are in bijection with the Cartesian product between the set of trees and the set of maps with a simple boundary. As a consequence, we count the number of tree decorated triangulations and quadrangulations with a given amount of faces and for a given size of the tree. 
Finally, we generalise the bijection to study other types of decorated planar maps and obtain explicit counting formulas for them.  
\end{abstract}

\section{Introduction}

In this paper, we study the combinatorial properties of tree-decorated maps via the use of a simple bijection. Tree-decorated maps are a couple made of a (planar) map and a subtree. They are one-parameter families interpolating between planar maps, when the subtree has size 1, and the spanning-tree decorated map, when the subtree has the same number of vertices that the whole graph. 

Planar maps and spanning-tree decorated maps have been thoroughly studied, both in combinatorics and probability. Planar maps were introduced in \cite{Ed60} and afterwards have been thoroughly studied in many works from both a combinatorial (see for example \cite{Tut62,SCH98,BG}) and a probabilistic perspective (see for example \cite{MM,LeGall,Mier}). Spanning-tree decorated maps were first studied in \cite{Mul67}, where they obtain a simple closed formula to count them, which later was explained in \cite{WL72,CDV86,Ber07} through bijective methods. These bijections became key to the study of planar maps decorated by statistical physics models  \cite{She}.

\subsection{Motivation}
The main motivation for the introduction of this model is to try to understand the difference,  as metric spaces, between uniformly chosen planar maps and uniformly chosen spanning-tree decorated planar maps. 

\begin{figure}
	\includegraphics[width=0.9\textwidth]{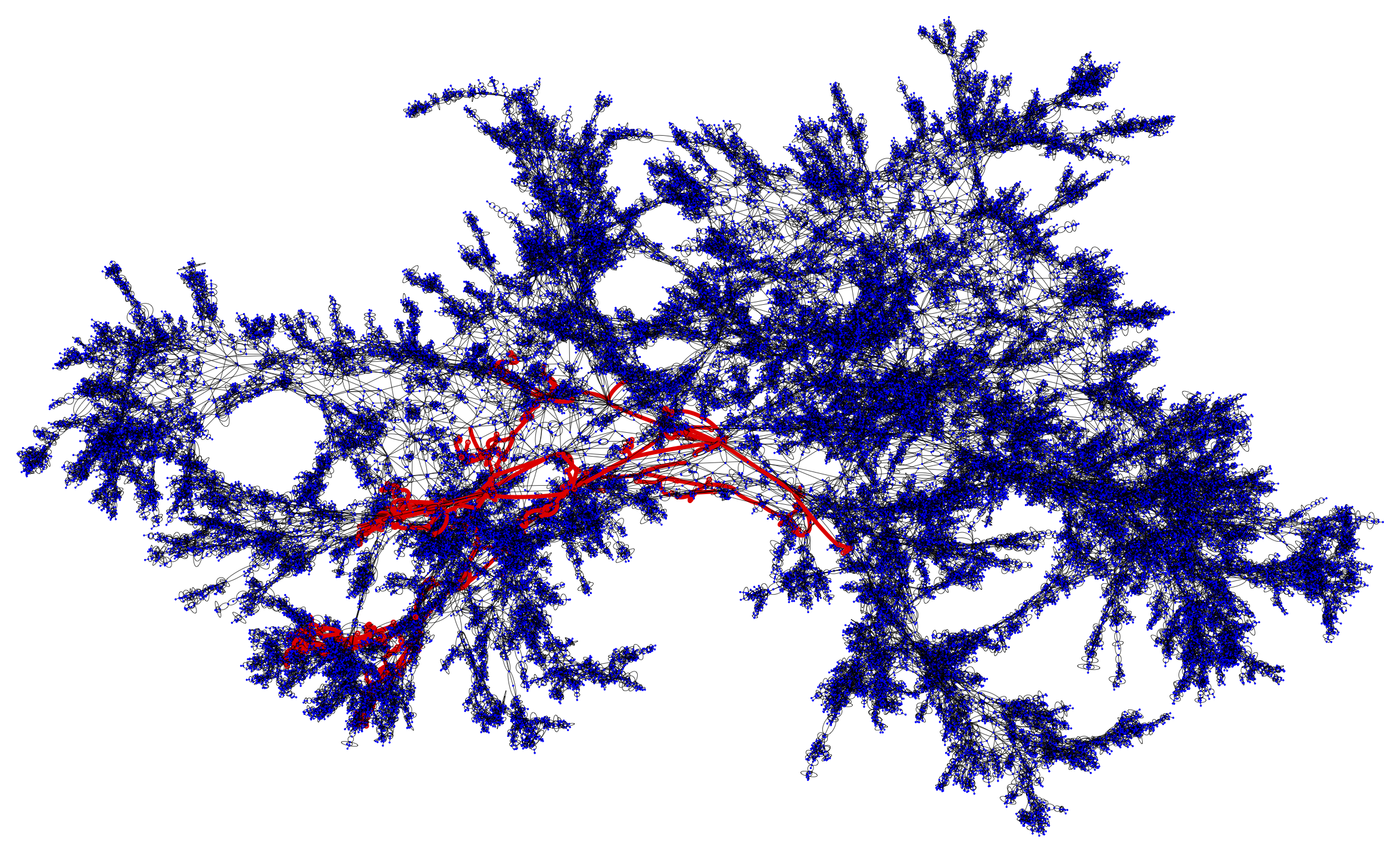}
		\caption{A simulation, based in our bijection, of a uniformly chosen tree decorated quadrangulation with $90000$ faces and with a tree of $500$ edges. The tree is in red and the image is close to an isometrically embedding of the map in $\R^3$.}
%	\includegraphics[width=0.9\textwidth]{Images/WQ30k50-1.png}
%	\caption{A simulation, based in our bijection, of a uniformly chosen tree decorated quadrangulation with $30000$ faces and with a tree of $50$ edges. The tree is in red and the embedding tries to isometrically embed the map in $\R^3$.}
\end{figure} 

The study of planar maps as metric spaces has been an active area of research in combinatorics and probabilities. In the case of a uniformly chosen planar quadrangulation, Cori-Vauquelin-Schaeffer (CVS) bijection \cite{SCH98} has been used to understand many of its asymptotic properties: the distances scale like the number of edges to the power $1/4$ and there is an explicit limiting metric space called Brownian map \cite{MM,Mier, LeGall}. However, in the world of uniformly chosen spanning-trees decorated planar maps, we do not know much. We can only give bounds on the order of the diameter as a function of the number of edges \cite{GHS,DGExpB}. On the optimistic side, Walsh and Lehman's bijection \cite{WL72} shows that the spanning tree decorating the map is chosen uniformly over all trees with a certain size (number of vertices).

The main reason why it is difficult to understand distances in the context of spanning-tree decorated maps becomes clear when one compares it to the case of planar quadrangulations. The main tool used to study distances in these planar maps is the (CVS) bijection, which relates a planar map to a pair of trees. In this bijection, one of the trees encodes the distance to a marked point, thus by knowing this tree, one gets an immediate upper bound on the diameter of the map.  On the side of tree-decorated planar maps, we are not that lucky. Even though Walsh and Lehman's bijections \cite{WL72,Ber07} relate them to a pair of trees, it is not possible to extract any information about distances in the original graph from just looking at either of them.

At this point, let us make a remark from the point of view of conformal field theory, where these two models are not expected to look the same. Uniformly chosen planar maps are model associated to central charge equal to 0 ($c=0$), while spanning-tree decorated maps have an associated central charge of $-2$ ($c=-2$) \cite{JS}. Thus, two objects, in the world of tree-decorated maps, have two-different central charges, meaning that theirs conformal properties are interacting. Trying to understand how this interaction works in the limit is the main interest of a follow up paper \cite{FS}.

\subsection{Results}
Let us, now, present the main results of the paper. For detailed definitions, we refer the reader to Section \ref{s.preliminaries}.

The main result of this paper is an explicit bijection between tree-decorated maps and maps with a simple boundary quotiented by a tree.
\begin{thm}\label{theo1}
	 The set of tree-decorated maps with $f$ faces, with a tree of $m$ edges and a root-edge on the tree, is in bijection with the Cartesian product of trees with $m$ edges and maps with a simple boundary of size $2m$ and $f$ interior faces. 
\end{thm}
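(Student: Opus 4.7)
The plan is to prove Theorem \ref{theo1} by constructing an explicit bijection via a cut-and-glue procedure. In the forward direction, given a tree-decorated map $(\mm, \tt)$ rooted at an edge $e_0$ of $\tt$, I would cut $\mm$ open along $\tt$: every edge of $\tt$ is duplicated into two boundary edges, and every vertex $v$ of $\tt$ of degree $d$ is split into $d$ copies, one for each corner of $\tt$ at $v$. This yields a map $\MMM$ with the same $f$ interior faces as $\mm$ and a new outer face bounded by a cycle of length $2m$, which can be naturally rooted using $e_0$. The output is the pair $(\tt, \MMM)$, with $\tt$ retained as an abstract rooted plane tree.

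In the inverse direction, given a rooted plane tree $\tt$ with $m$ edges and a rooted map $\MMM$ with a simple boundary of length $2m$ and $f$ interior faces, I would read off the contour of $\tt$ starting from its root corner, obtaining a sequence $e_1, \ldots, e_{2m}$ in which each tree edge appears twice. I would then identify the $i$-th boundary edge of $\MMM$ (going around the boundary starting from the root) with $e_i$. This pairs the $2m$ boundary edges into $m$ tree edges consistently, producing a planar tree-decorated map whose root lies on the tree.

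To finish, I would verify that the two operations are mutually inverse and preserve the number of interior faces, the tree size, and the rooting. The main obstacle is to show that the cut of any tree-decorated map indeed produces a map with a \emph{simple} boundary. The key combinatorial fact is that an $m$-edge tree has total degree $2m$ and therefore $2m$ distinct corners; each of these corners becomes a distinct vertex of the cut boundary, so the boundary is a simple cycle of length $2m$. Conversely, one must verify that the gluing step does not produce an invalid map: this follows because the contour of a plane tree realizes exactly the cyclic order of a simple boundary, so the identification is planar by construction and the resulting tree sits inside the new map without self-intersection. Beyond this, the argument reduces to careful bookkeeping of root conventions and the consistent pairing between boundary edges of $\MMM$ and contour edges of $\tt$.
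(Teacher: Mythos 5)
Your proposal is correct and follows essentially the same route as the paper: the forward map is the paper's ``ungluing'' (duplicating each tree edge and splitting each tree vertex into its corners, yielding a simple boundary of length $2m$ because a tree with $m$ edges has exactly $2m$ corners), and the inverse is the paper's ``gluing'' via the contour function of the tree, which identifies boundary edges in pairs. The only point treated more carefully in the paper is the verification that the cut and glued objects are genuinely planar maps (done there via an orientation-preserving homeomorphism of the complement of the tree and its prime ends), which your sketch asserts but does not argue.
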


The bijection can be summarise as followed: the tree is kept and the map with a boundary is produced by a duplication of the edges of the tree. We call this direction of the bijection ungluing, and its inverse gluing. Note that to gluing consists only in identifying the boundary of the map using the equivalence relationship generated by the tree. Additionally, let us note that as the decorating tree is kept without any changes, in a uniformly chosen tree-decorated map, the law of the tree is uniform among all the trees of a given size (\Cref{c.uniform}). This is a generalisation of the result for spanning-trees decorated maps. Furthermore, this also shows that the bijection also works when restricted to maps decorated by a special type of tree.

The main contribution of \Cref{theo1} is to connect the study of tree-decorated maps to that of maps with a boundary. In the spirit of the motivation, note that maps with boundary are also an interpolation model, as when the boundary is of size $2$, we recover planar maps, and, when the boundary has twice the number of edges, we recover planar trees.  This effect was explored, for uniformly chosen maps with a boundary in \cite{BetQ}, and we hope that the bijection allows us to transfer the transition from map to a tree to that from a map to a spanning tree-decorated map.

Important consequences of \Cref{theo1} are counting formulas for some subsets of tree-decorated maps. A close look at the bijection of \Cref{theo1} shows that the ungluing  procedure changes only the  newly created face. Thus, it is possible to obtain counting formulae for tree-decorated (and  spanning-tree decorated) $q$-angulations. To obtain these results, we need to count the maps with a simple boundary and use a re-rooting argument. Luckily, we can find these countings in \cite{Kr07} for triangulations and in \cite{BG09} for quadrangulations.

\begin{cor}\label{c.counting}
	For $m\leq f/2+1$, the number of tree-decorated triangulations where the tree has $m$ edges and the map has $f$ faces is
	\begin{equation}
	2^{f-2m}\frac{(3f/2+m-2)!!}{(f/2-m+1)!(f/2+3m)!!}\frac{3f}{m+1}\binom{4m}{2m,m,m},
	\end{equation}
	where $n!!$ stands for the double factorial of $n$.
	
	Furthermore, for $m\leq f+1$, the amount of tree-decorated quadrangulations, where the tree has size $m$ and the map has $f$ faces
	\begin{equation}
	3^{f-m}\frac{(2f+m-1)!}{(f+2m)! (f-m+1)!} \frac{4f}{m+1}\binom{3m}{m,m,m}. 
	\end{equation}
\end{cor}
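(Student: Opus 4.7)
The plan is to combine \Cref{theo1} with classical closed-form enumerations of plane trees and of $q$-angulations with a simple boundary, followed by a re-rooting argument to pass to the standard convention in which the root-edge of the map is arbitrary.

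First, I would apply \Cref{theo1} directly. Specialising to $q$-angulations, the number $\tilde N_q(m,f)$ of tree-decorated $q$-angulations with $f$ faces, tree of $m$ edges, and root-edge on the tree factorises as $\tilde N_q(m,f) = C_m \cdot B_q(m,f)$, where $C_m = \frac{1}{m+1}\binom{2m}{m}$ is the Catalan number of rooted plane trees with $m$ edges and $B_q(m,f)$ is the number of rooted $q$-angulations with a simple boundary of perimeter $2m$ and $f$ interior faces. For $B_q(m,f)$ I would invoke Krikun's formula \cite{Kr07} when $q=3$ and Bouttier-Guitter's formula \cite{BG09} when $q=4$; both give explicit expressions as products of factorials and double-factorials in $m,f$.

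Second, I would perform the re-rooting. The corollary counts tree-decorated $q$-angulations with an arbitrary root-edge, while \Cref{theo1} only produces those whose root-edge lies on the tree. Since each face of a $q$-angulation carries $q$ sides and each edge belongs to exactly two faces, the total edge count is $E=qf/2$, giving $qf$ oriented edges, of which exactly $2m$ lie on the decorating tree. As rooted planar maps have no non-trivial automorphisms, averaging over the choice of root yields
\[
N_q(m,f) \;=\; \frac{qf}{2m}\, \tilde N_q(m,f) \;=\; \frac{qf}{2m}\, C_m\, B_q(m,f).
\]

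Finally, I would simplify algebraically. The factor $\frac{qf}{2m}C_m$ collapses, using $\binom{2m}{m}$, into the leading prefactor $\frac{qf}{m+1}$ together with the multinomial coefficients $\binom{4m}{2m,m,m}$ or $\binom{3m}{m,m,m}$ appearing in the statement, while the remaining factorial/double-factorial ratios come from $B_q(m,f)$ itself. I expect this last step to be the only real obstacle: not from any conceptual subtlety but from the fact that the enumerations of \cite{Kr07,BG09} are stated in somewhat different normalisations (half-perimeter versus perimeter, internal vertices versus internal faces, possibly up to a symmetry factor of the root), and the conventions must be matched carefully before the identities collapse to the announced form.
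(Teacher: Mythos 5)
Your proposal is correct and follows essentially the same route as the paper: the paper obtains \Cref{c.counting} as the $r=1$ case of \Cref{cor2}, i.e.\ the bijection of \Cref{theo1}, the Catalan count $C_m$ of plane trees, the enumerations of $q$-angulations with a simple boundary from \cite{Kr07} and \cite{BG09,BerF18}, and then the re-rooting identity \eqref{e.rooting}, whose $r=1$ specialisation is exactly your factor $qf/(2m)$ (with $qf$ the number of oriented edges and $2m$ the number of possible root-edges on the tree, justified as you do by the absence of non-trivial automorphisms of rooted maps). The only caveat is the one you already flag: matching the normalisations of the boundary-map counts before the factorial identities collapse.
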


Note that triangulations, resp. quadrangulations, with $f$ faces have $f/2+2$ vertices, resp. $f+2$ vertices. Thus, one can also find the number of spanning tree-decorated triangulations and quadrangulations. These formulas appeared first in \cite{BM11}, and were based in Walsh and Lehman's bijection \cite{WL72}. Thus, these results give a new bijective proof of them.
\begin{cor}\label{cor:sptree}
	The number of spanning-tree decorated triangulations of $f$ faces and the root-edge could be placed in every possible oriented edge is
	\begin{equation}\label{e.stree tri}
	\frac{12f}{(f+4)(f+2)^2}\binom{2f}{f,f/2,f/2}.
	\end{equation}
	Furthermore, the cardinal of the set of spanning-tree decorated quadrangulation with $f$ faces and the root-edge could be placed in every possible oriented edge is
	\begin{equation}\label{e.stree quad}
	\frac{4f}{(f+1)^2(f+2)}\binom{3f}{f,f,f}.
	\end{equation}
\end{cor}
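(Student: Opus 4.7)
The plan is to deduce both identities from Corollary \ref{c.counting} by plugging in the value of $m$ that makes the decorating tree span the map. By Euler's formula a triangulation with $f$ faces has $v = f/2+2$ vertices, so a spanning tree carries $m = f/2+1$ edges; analogously a quadrangulation with $f$ faces has $v = f+2$ vertices and a spanning tree carries $m = f+1$ edges. Note that these are exactly the extremal values permitted by the bounds $m \le f/2+1$ and $m \le f+1$ of Corollary \ref{c.counting}.

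For the triangulation case I substitute $m = f/2+1$ into the corresponding formula of Corollary \ref{c.counting}. Several factors collapse: $(f/2-m+1)! = 0! = 1$; the arguments of the two double factorials become $3f/2+m-2 = 2f-1$ and $f/2+3m = 2f+3$, so their ratio reduces to $1/((2f+1)(2f+3))$; and $\binom{4m}{2m,m,m} = \binom{2f+4}{f+2,\,f/2+1,\,f/2+1}$ equals $\frac{16(2f+1)(2f+3)}{(f+2)^2}\binom{2f}{f,f/2,f/2}$ after using $(2f+2)(2f+4) = 4(f+1)(f+2)$ to extract the factors that cancel the double-factorial quotient. Combining with the prefactor $2^{-2}$ and $3f/(m+1) = 6f/(f+4)$ produces \eqref{e.stree tri}. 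For quadrangulations the analogous substitution $m = f+1$ gives $(2f+m-1)!/(f+2m)! = (3f)!/(3f+2)! = 1/((3f+1)(3f+2))$, and $\binom{3m}{m,m,m} = \binom{3f+3}{f+1,f+1,f+1}$ rewritten in terms of $\binom{3f}{f,f,f}$ contributes the factor $3(3f+1)(3f+2)/(f+1)^2$; together with $3^{-1}$ and $4f/(m+1) = 4f/(f+2)$ this produces \eqref{e.stree quad}.

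The remaining point, as anticipated by the remark in the text that \emph{``we need to count the maps with a simple boundary and use a re-rooting argument''}, is to translate the rooting convention of Theorem \ref{theo1}, in which the root-edge must be an oriented edge of the decorating tree, into the convention of Corollary \ref{cor:sptree}, in which the root-edge may be any oriented edge of the map. Because a spanning tree contains all vertices of the map, this is a clean bookkeeping exercise comparing the number of oriented tree edges with the number of oriented map edges. I expect this conversion, together with the factorial arithmetic above, to be the only obstacle: there is no genuinely new combinatorial content beyond Theorem \ref{theo1} and Corollary \ref{c.counting}.
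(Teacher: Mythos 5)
Your overall route is exactly the paper's: \Cref{cor:sptree} is obtained purely by substituting the spanning values $m=f/2+1$ and $m=f+1$ into \Cref{c.counting}, and your quadrangulation computation is correct and complete. However, your closing paragraph rests on a misreading of the conventions: the formulas of \Cref{c.counting} are already stated for a root-edge placed at an arbitrary oriented edge of the map (they are the $r=1$ case of \eqref{e.forest tri}--\eqref{e.forest quad}, into which the re-rooting identity \eqref{e.rooting} has already been absorbed --- witness the global factors $3f$ and $4f$, the numbers of oriented edges). So there is no ``remaining point'': no conversion between rooting conventions is needed, and inserting the ratio of oriented map edges to oriented tree edges that you are hinting at would destroy the quadrangulation identity you have just verified. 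Leaving this unresolved means your argument does not actually pin down either formula.

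More seriously, in the triangulation case your own intermediate factors do not multiply to \eqref{e.stree tri}. You correctly obtain $2^{-2}$, the double-factorial ratio $1/((2f+1)(2f+3))$, the factor $6f/(f+4)$, and
\[
\binom{2f+4}{f+2,\,f/2+1,\,f/2+1}=\frac{16(2f+1)(2f+3)}{(f+2)^{2}}\binom{2f}{f,f/2,f/2},
\]
but the product of these is
\[
\frac{24f}{(f+4)(f+2)^{2}}\binom{2f}{f,f/2,f/2},
\]
which is \emph{twice} \eqref{e.stree tri}. This factor of $2$ is not your invention: \Cref{c.counting} evaluated at $m=f/2+1$ and \eqref{e.stree tri} are genuinely inconsistent with each other. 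A direct check at $f=2$ is instructive: there are $4$ rooted triangulations with two faces (the triangle, plus the three rootings of the loop with two pendant edges), consistent with \Cref{c.counting} at $(f,m)=(2,0)$ giving $12=4\times 3$ vertex-marked triangulations; these $4$ rooted maps carry $3+3=6$ spanning trees in total, which matches $24f/((f+4)(f+2)^{2})\binom{2f}{f,f/2,f/2}$ and not the value $3$ given by \eqref{e.stree tri}. So either \eqref{e.stree tri} is missing a factor of $2$ or the triangulation formula of \Cref{c.counting} carries a spurious one; in any case you cannot simply assert that the substitution ``produces \eqref{e.stree tri}'' --- carrying out the final multiplication would have exposed the discrepancy.
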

Let us remark that the counting formula for general maps with a given boundary and edges size exists and is not closed so we decided not to present it here. Nevertheless, we compute its generating function in Section \ref{ssec:sbound}. Furthermore, note that the spanning-tree decorated maps with a given number of faces is infinite, so as it is stated the bijection is not interesting in that case.  Nevertheless, given that the bijection does not change the vertices, edges and faces in the interior the same proof can be adapted to obtain the following result.
	\begin{thm}\label{theo2}
		The set of tree-decorated maps with $e+m$ edges decorated by a tree with $m$ edges and a root-edge on the tree, is in bijection with the Cartesian product of trees with $m$ edges and maps with a simple boundary of size $2m$ and $e$ interior edges. 
	\end{thm}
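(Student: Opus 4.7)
The plan is to reuse the ungluing/gluing bijection established for \Cref{theo1} essentially verbatim, and simply replace the face grading by an edge grading. Recall that the ungluing operation keeps the decorating tree untouched and duplicates each of its $m$ edges to open a new face of degree $2m$ running around the tree; crucially, the non-tree edges are not touched. In particular, an edge of the tree-decorated map that does not belong to the tree is sent to an edge of the resulting bounded map that does not lie on the new boundary, and conversely every interior edge of a map with a simple boundary of length $2m$ corresponds, after gluing, to a non-tree edge of the resulting tree-decorated map.

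First I would reread the explicit description of the bijection in the proof of \Cref{theo1} and isolate the single fact that only the $m$ tree edges are duplicated, so that the set of non-tree edges is preserved under both gluing and ungluing. This is the only extra observation really needed; bijectivity, compatibility with the tree factor, and the simplicity of the boundary have already been proved for \Cref{theo1}.

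Next I would tally the edges on both sides. A tree-decorated map with $m$ tree edges and $e+m$ total edges has exactly $e$ non-tree edges. After ungluing, those $e$ edges become the interior edges of the image, while the tree contributes $2m$ boundary edges; so the resulting map with a simple boundary has boundary length $2m$ and exactly $e$ interior edges. Conversely, gluing a map with a simple boundary of length $2m$ and $e$ interior edges along a tree with $m$ edges produces a tree-decorated map with $m+e$ edges and a tree of size $m$. Hence the bijection of \Cref{theo1} restricts to the edge-graded strata described in the statement.

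I expect no real obstacle, since \Cref{theo2} is only a refinement of \Cref{theo1} tracked by a different parameter. The one small point worth verifying is that no non-tree edge ever ends up on the new boundary face after ungluing (otherwise it would be counted as boundary rather than interior), but this is immediate from the construction: the new face is obtained by splitting along the tree, so its incident edges are exactly the $2m$ duplicates of the tree edges, and nothing else.
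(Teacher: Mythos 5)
Your proposal is correct and matches the paper's own justification: the paper derives \Cref{theo2} from \Cref{l.p-angulations} and \Cref{p.bijection} by observing that the gluing/ungluing only duplicates the $m$ tree edges into the $2m$ boundary edges and leaves all other edges untouched, so the face grading can be replaced by the edge grading exactly as you describe. No gaps.
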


The main bijection is not only useful to obtain counting results, but we plan to use it in \cite{FS} to study distances in uniformly chosen tree decorated maps. The bijection becomes useful because distances in uniformly chosen maps with boundaries (simple or not) have been intensively studied  \cite{BetQ,BetM,GM} and are better understood than that of tree-decorated maps.

Let us, also, mention that the bijection presented in this work is simple enough so that decoration on the vertices can be carried between the two objects. This may allow, in the future, to understand what happens when the tree-decorated map is not chosen uniformly but weighted by a statistical physic law. This probability laws, have been the object of great interest in the statistical physics community, especially after the introduction of the so-called `Hamburger-Cheeseburger'-bijection \cite{She}.

Most of the other results of this article consist in using the main idea of the bijection of Theorem \ref{theo1} to produce bijections between other combinatorial objects. In this flavour, we study forest-decorated planar maps (see \Cref{cor2}) and tree-decorated planar maps with a simple boundary (see \Cref{cor1}). The latter one allows us to make the gluing procedure in a progressive dynamical way (see \Cref{prop:nonsimple}).

On another note, we also explain what happens when one tries to use our bijection with maps whose boundary is not necessarily simple. In this case, the gluing does not produce maps, but what we call bubble-maps (see \Cref{sec:nonsimple}), which are maps embedded in a tree-like structure of spheres, and which are decorated by a specific type of circuit (see \Cref{prop:nonsimple}).

As a final remark, we would like to say that gluing maps along the boundary has already been done, but not when the decoration is a tree. Some gluings in the literature are, for example, self avoiding walks \cite{DK88,BetQ,GM2,CarCur} and loops \cite{BBG12}. 

	\subsection{Organisation of the paper}
The paper is organised as follows: we start with the preliminaries, where we carefully present the classical objects that are important for the paper together with the maps we are interested in. In \Cref{Bij}, we present all the bijections and its proofs. Finally, in \Cref{count}, we discuss the counting formulas we obtain from the bijections.

\subsection{Acknowledgments}
We would like to thank Jérémie Bettinelli, Mireille Bousquet-Mélou, Éric Fusy, Vincent Jugé, Jean François Marckert, Gregory Miermont and Neil Sloane for interesting and fruitful discussions. The work of this paper started and was partially done during various trips of both authors financed by ERC grant LiKo 676999. Also, we would like to thank  Núcleo Milenio Stochastic models of complex and disordered system for inviting us to Chile, where part of this work was done, and to "Mobilité junior" of LaBRI who financed part of the trip of L.F. The research of L.F. has been partially supported by ANR GRAAL (ANR-14-CE25-0014). The research of A.S. is supported by ERC grant LiKo 676999.

\section{Preliminaries}\label{s.preliminaries}
\subsection{Elementary definitions}\label{s.elementary def}
In this section, we present the elementary concepts that appear in this work. As the main result of the paper is a self-contained bijection, the counting results are only stated when needed. For an introduction to planar maps, we recommend, for example, to see \cite{GJ04, FS09} and \cite{BM11}.

A \textsc{rooted planar map}, or \textsc{map} for short, is a finite connected graph embedded in the sphere that has a marked oriented edge. We consider that two maps are the same if there is an  homeomorphism between them that preserves the orientation (i.e. respecting a cyclic order around every vertex). We call \textsc{root edge} the marked oriented edge and \textsc{root vertex} its starting point. 

The \textsc{faces} of a map are the connected components of the complement of the edges in the embedding. The \textsc{degree} of a face is the number of oriented edges for which the face lies at its left. We call \textsc{root face} the face that is to the left of the root, following the sense of the root edge. A \textsc{$q$-angulation} is a map where each of the faces has degree $q$. We set $\MM{f}$ to be the set of all maps on $f$ faces. Finally, we call a \textsc{planted plane tree}, or \textsc{tree} for short, to a map with a single face and we denote the set of planted plane trees with $m$ edges as $\PPT{m}$.

To understand the properties of $\PPT{m}$, it is useful to codify trees using walks. In the literature, there are several of these codings, see for example Section 1 of \cite{LG05}. In this paper, we are interested in the most elementary of them: the contour function. We present this bijection briefly, for a more detailed description we refer to Section 1.1 of \cite{LG05} and Section 2 of \cite{Ber07}.

To introduce the contour function, let us first note that a tree has an intrinsic way of visiting all of its oriented-edges. This visit can be represented by a walker that starts from the root vertex and follows the direction of the root edge touching the tree with his left hand\footnote{Note that, in the literature, the walker usually walks following its right hand. In this work, the left hand convention makes some statements easier.}
as long as it walks. The walker, then, continues until it returns to the root edge. Note that this walk visits every oriented edge only once. Now, we define the contour function $C : \cro{0,2m} \to \N$ as giving, at time $n$, the distance to the root vertex (height) of the vertex visited at time $n$ by the walker.

The inverse of this bijection given by the contour function is explicit. We say that a function $C:\cro{0,2m}\to \N$ is a contour function if $C(0)=C(2m)=0$ and its increments are $\pm 1$, i.e., $C$ is a Dyck path. We can construct a plane tree from a contour function by saying that two points $n_1,n_2\in \cro{0,2m}$ are equivalent if \begin{equation}
\label{e.EquivalenceC}
C(n_1)=C(n_2)=\inf_{n\in \cro{n_1,n_2}} C(n).
\end{equation} The vertices of the tree can be recovered from the equivalence classes of the relation and  the edges can be recovered as follows: two vertices have an edge between them if they are the equivalent class of two elements that are exactly at distance $1$ in $\cro{0,2m}$. Note that each edge comes exactly from two steps of the walk, one going up and the other one coming down.
\begin{figure}[!h]
	\centering
	\includegraphics[scale=1]{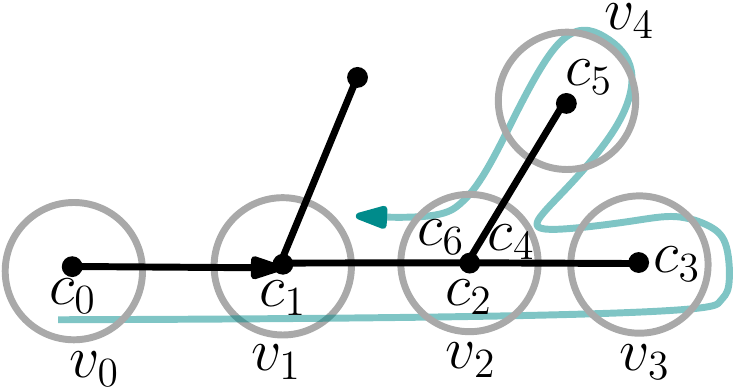}
	% \vspace{-0.6cm}
	\caption{{\Small Tree with part of the contour in green, the corners are numbered as $c_i$ and are shown in gray. All the corners belonging to the same circle belongs to the same equivalence class associated to a vertex, for example $v_2 = [c_2]_c= [c_4]_c= [c_6]_c$. }}
	\label{conttree}
\end{figure} 

We say that $\ff$ is a \textsc{forest with $r$ trees}, if $\ff= (\tt_1,\;\dots,\;\tt_r)$, where $\tt_i$ is a tree for all $i\in \{ 1, 2, \dots, r\}$. We define $\F{m_1,m_2,\dots,m_r}$ as the set of all forest on $r$ trees with prescribed sizes $m_1,\; m_2,\dots,\;m_r$, meaning that for every $i\in \{ 1, 2, \dots, r\}$, $\tt_i$ has $m_i$ edges. 
%We also define $\FF{m,p}$ to be the set of forest with $p$ trees, where the total number of edges is $m-p$, i.e. the union of $\F{m_1,m_2,\dots,m_p}$ for $\sum_{i=1}^p m_i = m-p$.

We define a \textsc{non-self crossing circuit} as a sequence of directed edges $(e_i)_{i=0}^{l-1}$, for some $l\in \N$, embedded in the plane, such that the head of $e_i$ is the tail of $e_{i+1\mod l}$ for all $i\in \cro{0,l-1}$, and that it does not cross itself. The last part means that for every vertex in the circuit, we do not find the pattern $e_{i},e_{j+1},e_{i+1},e_{j}$ in cyclical order, where $i,j \in \cro{0,l-1}$ and the sum in the indexes is modulo $l$.

A \textsc{map with a boundary} is a map that has one special face, which has arbitrary degree, however, we will work only with those having arbitrary even degree. The special face is denoted \textsc{external face} and all the others are called \textsc{internal faces}. The oriented edges incident to the external face form the boundary of the map and the root edge must belong to the boundary. Recall that an oriented edge is incident to a face if the face lies at the left of the oriented edge, this implies that the external face is the root face. We call $\MB{f}{m}$ the set of maps with a boundary with $f$ internal faces and with a boundary of size $2m$.

Note that oriented edges in the boundary of a map with a boundary have a canonical label in $\cro{0,2m}$. This label is obtained from the amount of steps that a walker, who starts from the root edge and follows the boundary of the root face, takes to arrive to a given edge. An interesting case is when the boundary of the map is simple, i.e., it is simple as a curve. In this case, the labels of the edges gives rise to a labelling of the vertices of the boundary. To separate them from the non-simple ones, we denote the set of maps with a simple boundary in $\MB{f}{m}$ as $\MSB{f}{m}$.

Let us also introduce another type of boundary. A boundary is called \textsc{bridgeless} if the walk described above never goes twice along the same edge. Let us give an alternative definition. An edge is called a bridge if its suppression disconnects the map. Thus, a boundary is said to be bridgeless if it does not contain any bridges.

We can also work with more than one boundary. Let $r\in \N$ and define a \textsc{map with $r$-boundaries} as a map with multiple special faces, called the boundary faces, which are pairwise vertex-disjoint and have simple boundaries. Furthermore, this map is multi-rooted, that is to say, every boundary faces has a root and the roots are labelled from $1$ to $r$, where the root labelled 1 coincides with the root of the map. We called $\MMSB{f}{m_1,\dots,m_r}$ the set of all multi-rooted maps with boundaries on $f$ internal faces and with boundaries of size $2m_1,\dots,2m_r$ respectively for the labels of the boundaries.

A map $\mm_1$ is said to be a submap of $\mm_2$ (with the notation $\mm_1\subset_M \mm_2$) if $\mm_1$ can be obtained from $\mm_2$ by suppressing edges and vertices. This definition implies that $\mm_1$ respects the cyclic order of $\mm_2$ in the vertices and edges remaining. A decorated map is a map with an especial submap, i.e. it is a pair $(\mm, \sm(\mm))$ with $\mm \in \MM{f}$ for some $f\in \N$ and $\sm(\mm)\subset_M \mm$.

\subsection{Objects of interest and notation}\label{s.objectsinterest}
We present, now, the main objects of study of the paper. In particular, we rigorously define the tree-decorated map.

Take a map $\mm \in \MM{f}$ and define $\T{m}(\mm)$ to be the set of (unrooted) trees $\tt \subset_M \mm$ with $m$ edges. A \textsc{tree-decorated map} is a couple $(\mm,\tt)$ with $\mm \in \M_f$ and $\tt\in \T{m}(\mm)$. We denote  $\DTMA{f}{m}$ the set of all of these pairs. Furthermore, we denote $\DTM{f}{m}$ the set of all tree-decorated maps where the root of the tree is in the map. Note that one can do similar definitions when the map $\mm$ is a $q$-angulation.The set of tree-decorated triangulations and quadrangulations is counted in \Cref{c.counting} and that of spanning-tree decorated triangulations and quadrangulations is counted in \Cref{cor:sptree}.
		
 Let us now consider a map with a simple boundary $\mm^b \in \MSB{f}{m_1}$ and define $\T{m_2}^b(\mm)$ to be the set of trees $\tt \subset_M \mm^b$ with $m_2$ edges containing a unique vertex on the boundary, the root vertex. A \textsc{tree-decorated map with a simple boundary} is a pair $(\mm^b,\tt)$ with $\mm^b\in \MSB{f}{m_1}$ and $\tt\in\T{m_2}^b (\mm^b)$. We define the set of all these pairs as $\TMSB{f}{m_1}{m_2}$. The set of all tree-decorated triangulations with a simple boundary is counted by \eqref{e.tree simple boundary tri} and that of quadrangulations is counted by \eqref{e.tree simple boundary quad}.

To finish, we define an \textsc{$r$-forest-decorated map} as a map decorated on a forest with $r$ non vertex-intersecting (unrooted) trees. The set of all $r$-forest decorated maps on $f$ faces and with the trees of the forest of size $m_1,\dots, m_r$ respectively is denote by $\DFMA{f}{m_1,1\dots,m_r}$, note that here the order of $m_1,...,m_r$ is not important. Furthermore, we denote $\DFM{f}{m_1,m_2,\dots,m_r}$, the set of $r$-forest-decorated map where each tree is  rooted, each root is labelled with a different number in $\cro{1,r}$ and the root labelled $1$ coincides with the root edge of the map.  The counting of $r$-forest decorated triangulations is given in \eqref{e.forest tri} and that of quadrangulation is given in \eqref{e.forest quad}.

\section{Main Bijections}\label{Bij}
\subsection{The basic bijection} \label{s.b}

\subsubsection{From tree decorated maps  to maps with  simple boundary and trees} In the following paragraphs, we construct the first part of the bijection: the ungluing procedure. This procedure is represented by a function $\phi$ that takes a tree-decorated map in $\DTM{f}{m}$ and gives a tree in $\PPT{m}$ together with a map with a simple boundary in $\MSB{f}{m}$. Basically, in this procedure, the resulting tree is equal to the decorating tree and the map with a boundary is obtained by a duplication of the oriented edges of the tree in such a way that the newly appeared oriented edges all belong to the same face, see Figure \ref{dectoboundandtree}.
\begin{figure}[!h]
	\begin{subfigure}{0.31\textwidth}
		\centering
		\scalebox{1}[1]{\includegraphics[scale=0.4]{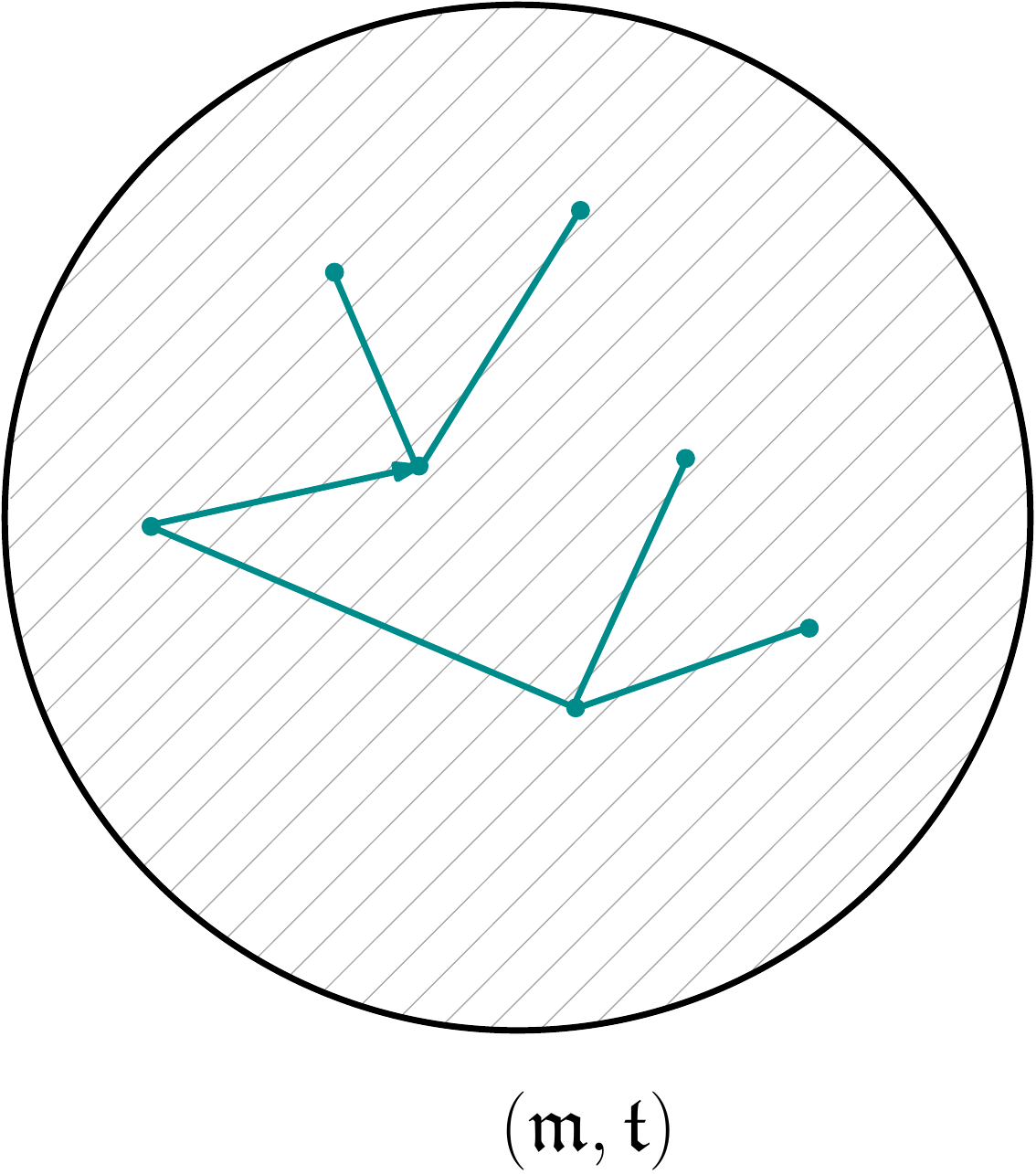}}
	\end{subfigure}
	\hspace{-0.25cm} $\substack{\phi\\\longrightarrow}$ \hspace{-0.75cm}
	\begin{subfigure}{0.63\textwidth}
		\centering
		\scalebox{1}[1]{\includegraphics[scale=0.4]{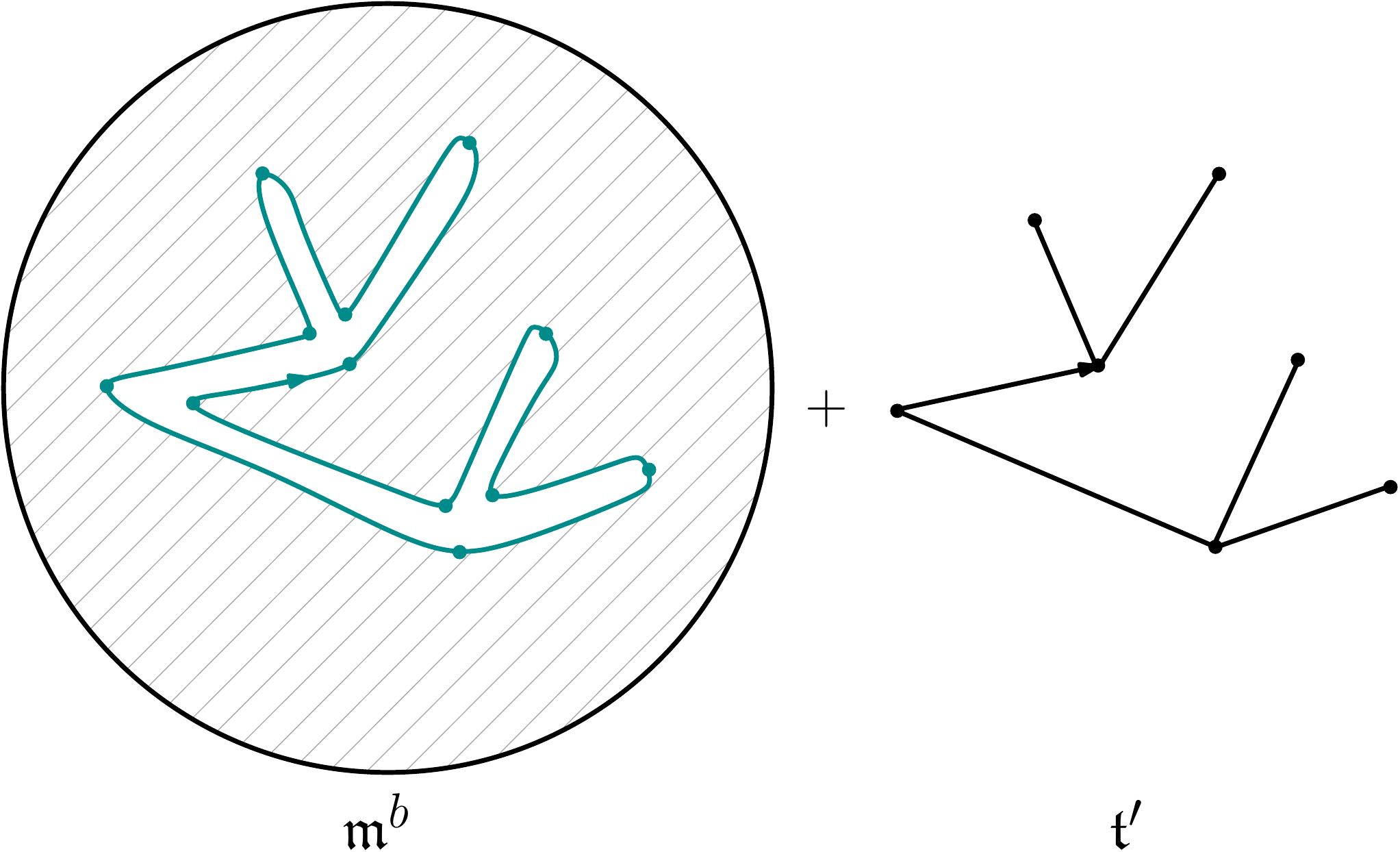}}
	\end{subfigure}
	% \vspace{-0.6cm}
	\caption{A simple sketch of the ungluing procedure.}
	\label{dectoboundandtree}
\end{figure}

 Consider a tree decorated map $(\mm,\tt)\in\DTM{f}{m}$, and denote $(\mm^b,\tt')$ as the (soon-to-be-constructed) image of $(\mm,\tt)$ under $\phi$. The tree $\tt'$ is taken equal to the tree $\tt$, in particular that the root edge of $\tt'$ is the same as that of $\tt$, i.e., the one of the tree decorated map. 
 
 The construction of the map with a boundary $\mm^{b}$ is not as straight-forward. We start by defining the notion of a corner of $\tt$ at a vertex $x$ as a pair of two consecutive (for the clockwise order) oriented edges, where the first one finishes at $x$ and the second one starts at $x$. Define $K$ as the set of corners of $\tt$. Note that an oriented edge that starts or ends in $x$ can be said to go to a corner $(e_1,e_2)$ of $x$ if it is between $e_1$ and $e_2$ for the clockwise order. 
 
 We can now define $\mm^b$. The set of vertices is the union between $K$ and the vertices of $\mm$ that are not in the tree $\tt$, i.e.,  $V(\mm)\backslash V(\tt)$.  To define the edges we just need to comeback to the original ones. Each oriented edge $e$ of $\mm$ whose vertices do not lie in $\tt$ is also an edge of $\mm^b$. Each oriented edge $e$ of $\mm$ that have at least one vertex in $\tt$, becomes an oriented edge that instead of the vertex in the tree has the corner in which the edge is incident. Finally, we add the oriented edges $(c_1,c_2)$ between two elements of $K$ if $(c_2,c_1)$ was already added before.

 To finish the definition of $\mm^b$ we are only missing the root edge and the embedding. The root edge of $\mm^b$ is the edge created from the root edge of $\mm$ by the procedure described. The embedding, is characterised by the cyclic orientation of the edges which is the same as that of $\mm$. In other words, in a corner the cyclic orientation is kept the same as in $\mm$, and this can be done because all edges in $\mm^b$ that belong to a corner can be identify with a unique edge in $\mm$.

\begin{figure}[!h]
	\begin{subfigure}{0.49\textwidth}
		\centering
		\includegraphics[scale=0.8,trim={1.5cm 4cm 3cm 2cm},clip]{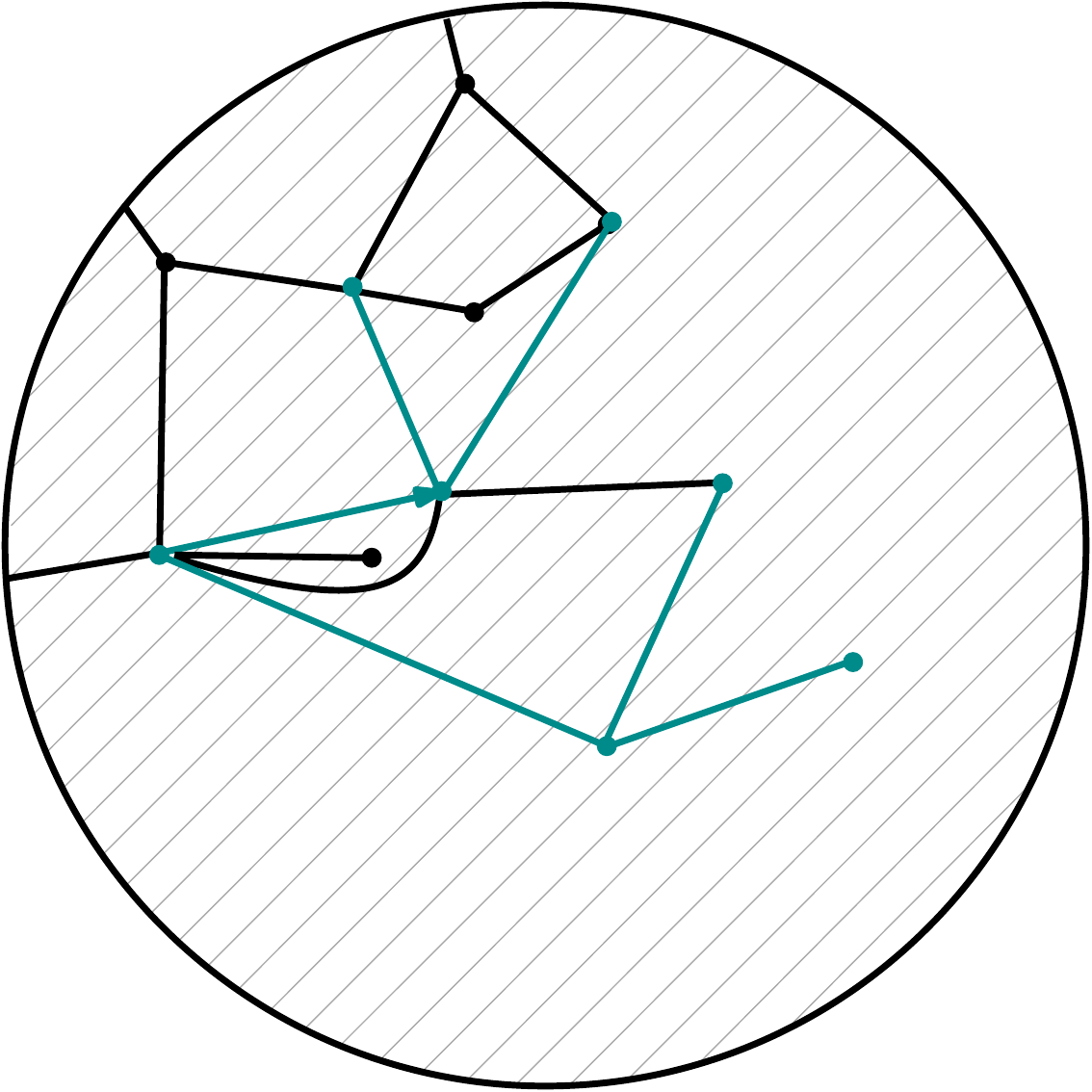}
	\end{subfigure}
	\hspace{-0.5cm}$\substack{\phi\\\longrightarrow}$\hspace{-0.5cm}
	\begin{subfigure}{0.49\textwidth}
		\centering
		\includegraphics[scale=0.8,trim={1.5cm 4cm 3cm 2cm},clip]{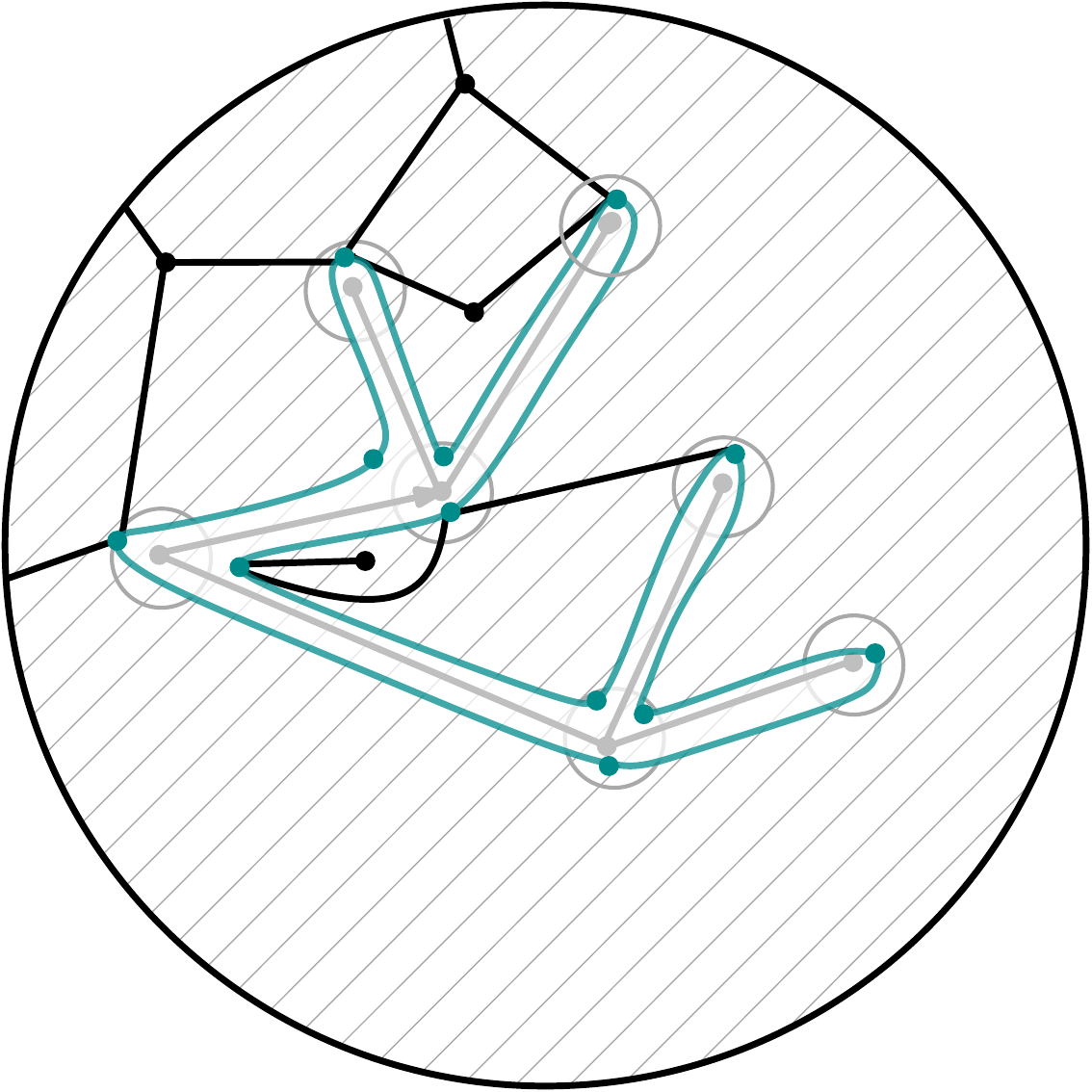}
	\end{subfigure}
	% \vspace{-0.6cm}
\caption{Local image of $\phi$ }
	\label{dectoboundandtree1}
\end{figure} 

Let us, now, show that the constructed map  actually belongs to $\MSB{f}{m}$. 
 \begin{lemma} \label{l.embedding1}
 	The constructed map $\mm^b$ belongs to $\MSB{f}{m}$.
 \end{lemma}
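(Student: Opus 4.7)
My plan is to verify the three properties defining membership in $\MSB{f}{m}$: (i) $\mm^b$ is a well-defined rooted planar map, (ii) its boundary is simple and of length $2m$, and (iii) it contains exactly $f$ internal faces. The overarching strategy is to view the ungluing $\phi$ as a cutting operation of the sphere along the topological subspace $\tt$, and then to track combinatorially what this cutting does to vertices, edges, and faces.

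First I would check that $\mm^b$ is a planar map. The construction splits each vertex $x$ of $\tt$ into its corners and duplicates each edge of $\tt$, while leaving the rest of $\mm$ untouched. The cyclic order around each new vertex is unambiguous: at a corner $(e_1,e_2)$ of $\tt$ at $x$, the cyclic order is inherited from the cyclic order in $\mm$ restricted to the edges of $\mm$ strictly between $e_1$ and $e_2$, augmented by the two duplicated tree-edges bounding the new face; at the remaining vertices of $V(\mm)\setminus V(\tt)$, the cyclic order is simply that of $\mm$. Connectedness is inherited from $\mm$ since every splitting of a tree-vertex is compensated by the freshly duplicated pair of parallel edges that joins the two newly created corner-vertices.

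Next I would show that the boundary is simple and of size $2m$. I plan to identify the boundary walk of $\mm^b$ with the contour walk of $\tt$ described in Section~\ref{s.elementary def}: as the walker traces the tree touching it with his left hand, each step along an oriented edge of $\tt$ corresponds to traversing one of the two newly created edges of $\mm^b$ (the one lying on the side of the new face), and each corner visited by the walker corresponds to one new vertex of $\mm^b$. Since $\tt$ has $m$ edges, hence $2m$ oriented edges and $2m$ corners, the boundary has length $2m$ and visits $2m$ distinct vertices, so it is simple. The root edge of $\mm^b$ lies on this boundary by construction, since the root of $\tt$ is the root of $\mm$.

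Finally I would count the internal faces. Cutting the sphere along a contractible subset (the tree $\tt$) creates exactly one new face, which plays the role of the external face of $\mm^b$. All other faces of $\mm$ are preserved by $\phi$: no edge of $\mm \setminus \tt$ is modified, and the cyclic orders at untouched vertices, as well as at each corner-vertex restricted to its interior-edges, are inherited from $\mm$. Hence $\mm^b$ has $f$ internal faces, as required. The main obstacle I expect is the formal justification that the local cyclic orders at the $2m$ new corner-vertices patch into a genuine planar embedding of a single connected map — topologically this is immediate since cutting a sphere along a tree yields a disk, but phrasing it cleanly from the combinatorial definition of an embedding requires careful bookkeeping of how the duplicated edges sit at each corner.
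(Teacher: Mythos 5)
Your proposal is correct and follows essentially the same route as the paper: cut the sphere along $\tt$, identify the new boundary with the contour of the tree, and observe that interior faces, edges and cyclic orders are untouched. Two small points of comparison are worth making. For the simplicity and length of the boundary, you argue directly by matching the boundary walk of $\mm^b$ with the contour walk of $\tt$ ($2m$ oriented edges, $2m$ corners, hence $2m$ distinct boundary vertices); the paper instead gets the length $2m$ from counting the duplicated oriented edges and gets simplicity by contradiction (a non-simple boundary would force an identification of corners of $\tt$, i.e.\ a cycle in the tree). Your version is slightly more explicit and also delivers the face count, which the paper only records in the remark following the lemma. For the one step that carries real content --- that the rotation system you define at the corner-vertices yields an embedding in $\S^2$ rather than in a higher-genus surface --- you correctly isolate it as the obstacle and give the right topological reason (``cutting a sphere along a tree yields a disk''), but you leave the bookkeeping open; the paper closes exactly this step by taking an orientation-preserving isomorphism from $\S^2\setminus\tt$ onto the half-sphere $\S^2\cap\{x>0\}$, extended to the boundary via prime ends so that the cyclic order of the corners of $\tt$ is preserved on $\S^2\cap\{x=0\}$, and then embedding $\mm^b$ through that map. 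If you want a fully self-contained argument you should either invoke this conformal/prime-end construction or, more combinatorially, verify Euler's formula for $\mm^b$ (vertices increase by $2m-|V(\tt)|+\,$\ldots, edges by $m$, faces by $1$) to certify genus zero.
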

 \begin{proof}

We first need to note that the boundary of the root face has size $2m$. To do that we just note that the boundary of the root face is made by all the oriented edges that went between two corners, thus it has exactly size $2m$.  Furthermore, the boundary of the map $\mm^b$ is simple, otherwise the tree would have at least one identification of vertices generating a cycle. 

To finish, we just need to show that $\mm^b$ is a (planar) map, thus that it can be embedded in the sphere, $\S^2$. This comes directly from the fact that there is an orientation preserving isomorphism that goes from $\S^2\backslash \tt$ to $\S^2\cap\{(x,y,z):x>0\}$, and that can be extended to the boundary in a way that it sends prime ends of  $\tt$ in $\S^2\backslash \tt$ to points in $\S^2\cap\{x=0\}$ keeping the cyclical order between them \footnote{In this case it follows from the fact that $\S^2 \backslash \tt$ is simply connected (so one can use Riemman's theorem), together with results of the behaviour of conformal functions close to the boundary. See Chapter 2 of \cite{Pom} for definitions of prime ends and the main results in the boundary behaviour of conformal maps.}. Thus, one can embed $\mm^b$ in $\S^2$ using any of these functions.  To finish, it is enough to see that any of this isomorphisms produce an embedding of $\mm^b$ with boundary contained in  $\S^2\cap\{x=0\}$.
 \end{proof}

Finally, let us note that the ungluing procedure gives $\mm^b$ with exactly one more face that $\mm$: the external one. Furthermore, the internal faces of $\mm^b$ are in one to one correspondence with the faces of $\mm$. Thus, we can conclude the following lemma.
\begin{lemma}\label{l.p-angulations}
	For any $q\in \N$, the un-gluing function $\phi$ takes $m$-tree decorated $q$-angulations in to a pair of an $m$-tree and a $q$-angulation with boundary of size $2m$.
\end{lemma}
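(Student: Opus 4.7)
The plan is to observe that the statement combines three facts: the output lies in the correct ambient set, the tree component has the correct size, and the $q$-angulation property is inherited by the internal faces. The first two are essentially free: by \Cref{l.embedding1}, $\mm^b\in\MSB{f}{m}$, so the boundary is simple of size $2m$; and the tree $\tt'$ output by $\phi$ has $m$ edges since by construction $\tt'=\tt$. The only substantive claim to verify is that every internal face of $\mm^b$ has degree $q$.

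For that step, I would build on the remark immediately preceding the lemma: the internal faces of $\mm^b$ are in one-to-one correspondence with the faces of $\mm$. The plan is to make this correspondence explicit at the level of oriented edges and show it is degree-preserving. Fix a face $F$ of $\mm$ and list its bounding oriented edges in cyclic order. The ungluing procedure acts on each of these edges in one of three ways: (a) if neither endpoint lies on $\tt$, the edge is kept unchanged; (b) if exactly one endpoint lies on $\tt$, the tree-endpoint is replaced by the unique corner of $\tt$ into which the edge points from the side of $F$; (c) if both endpoints lie on $\tt$ (so the edge belongs to $\tt$ and is thus traversed on both sides), it is replaced by the duplicate that bounds $F$, with endpoints relabelled by the appropriate corners. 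None of these operations inserts or deletes an oriented edge from the cyclic boundary of $F$, so the face corresponding to $F$ in $\mm^b$ has exactly the same degree.

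Concluding, if $\mm$ is a $q$-angulation then every internal face of $\mm^b$ has degree $q$, so $\mm^b$ is a $q$-angulation with a boundary of size $2m$, and $\tt'$ is a tree with $m$ edges, as required.

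The only real obstacle is the rigorous verification in case (c) that the duplicate of a tree edge has the correct corner-labels to sit on the boundary of the face $F$ in the correct cyclic position; this is essentially a local check, handled by the fact (built into the construction of $\phi$) that the cyclic orientation around each vertex of $\mm$ is preserved at each corner of $\mm^b$, which ensures the face walks in $\mm^b$ track the face walks of $\mm$ step-for-step away from the root face. Once this is noted, the bijection on oriented edges bounding $F$ is tautological, and degree preservation follows.
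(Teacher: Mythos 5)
Your argument is correct and takes essentially the same route as the paper, which proves the lemma simply by observing that the internal faces of $\mm^b$ are in one-to-one, degree-preserving correspondence with the faces of $\mm$ (the only new face being the external one); you merely spell out why that correspondence preserves degrees. One minor imprecision: in your case (c), having both endpoints on $\tt$ does not imply that the edge belongs to $\tt$ (a chord between two tree vertices is possible), but such an edge is simply kept with both endpoints relabelled by corners, so the degree count is unaffected.
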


\begin{rem}\label{r.decoration}
	Let us also note that as vertex and faces are kept the same, except for the newly created external face, any face or vertex decoration commutes with the bijection.
\end{rem}

\subsubsection{From maps with simple boundary and trees to tree decorated maps} \label{s.gluing}
It is time to define the gluing function $\psi$ that takes a map with a boundary in $\MSB{f}{m}$ and a tree in $\PPT{m}$ and gives an $m$-tree decorated map and is the inverse of the ungluing function $\phi$. Informally, $\varphi$ should identify two oriented edges incident to the external face of the map using the relation given by the oriented edges in the tree.

Let $(\mm^b,\tt')$ be an element of $\MSB{f}{m}\times \PPT{m}$ and construct $(\mm,\tt)\in \DTM{f}{m}$, the value of $\psi((\mm^b,\tt'))$ as follows. Recall that the vertices of the external face of $\mm^b$ are indexed from $0$ to $2m-1$, and call $C$ the contour function of $\tt'$. Recall that $C$ induces an equivalent relationship on vertices via equation \eqref{e.EquivalenceC}, and define $V'$ as the set of equivalence classes.

Let us now construct $\mm$. The vertex set is made by the union of $V'$ with all vertices of $\mm^b$ that do not belong to the exterior face. The edge set of $\mm$ is constructed from those of $\mm^b$ in the following way. Let $(x,y)$ be an oriented edge of $\mm^b$, then the edge $(G(x),G(y))$ is in $\mm^b$, where
\begin{equation}
G(x):= \begin{cases}
[l]&\text{ if } x\in V',\\
x  &\text{else,} \end{cases}
\end{equation}
where $l$ is the label of $x$, and $[l]$ is the equivalence class of $l$ under the equivalence relationship defined by $C$.

Before defining the embedding, let us give some properties of the graph built.
\begin{itemize}
	\item If $(x,y)$ is an oriented edge that belongs to the boundary of $\mm^b$, its reverse $(y,x)$ is associated exactly to one other edge $(y',x')$ such that $(x',y')$ belongs to the boundary of $\mm^b$. In other words, $C$ induces a perfect matching of the edges in the boundary
	\item As the boundary of $\mm^b$ is simple, the image of the edges in the boundary of $\mm^b$ has the same tree structure as $\tt'$.  We define $\tt$ as this image.
\end{itemize} 

To finish the construction of $\mm$, we need to set the cyclical order of the edges around each vertex in the map. If $v$ is a vertex of $\mm$ that does not belong to $V'$, we set the order or the edges surrounding it as the order of $\mm^b$. In the case where $v\in V'$, we consider the order as the gluing of orders, following the corner identification around $v$ (see Figure (\ref{cornergluing})). Note that this also makes that the cycle order of $\tt$ is the same that the one of $\tt'$.
\begin{figure}[!h]
		\centering
		\scalebox{1}[1]{\includegraphics[scale=0.5]{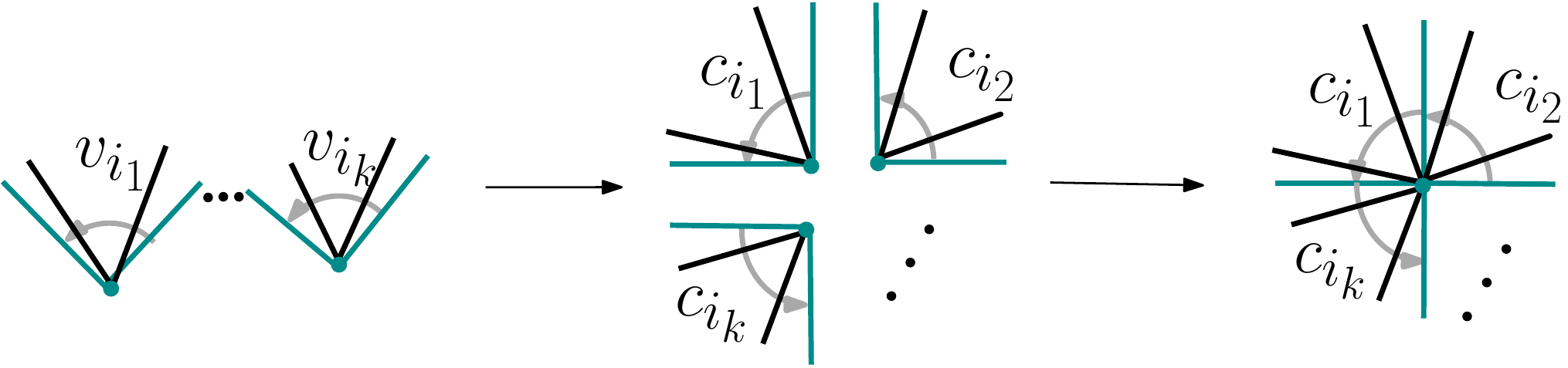}}
	% \vspace{-0.6cm}
	\caption{{\Small Gluing of corners and edge order around a vertex. For vertices $v_{i_1},v_{i_2},\dots,v_{i_k}$ with corners in the same class, we associate their order of edges and we glue these orders following the corner sequence of the contour function.}}
	\label{cornergluing}
\end{figure} 

To finish, let us actually prove that there exist an embedding that satisfy the given properties.
\begin{lemma}\label{l.embedding2}
	$(\mm,\tt)$ belongs to $\DTM{f}{m}$
\end{lemma}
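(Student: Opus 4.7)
The plan is to verify in turn the four conditions defining $\DTM{f}{m}$: (i) $\mm$ is a planar map, (ii) it has exactly $f$ faces, (iii) $\tt \subset_M \mm$ is a tree with $m$ edges, and (iv) the root edge of $\mm$ lies in $\tt$ and agrees with the root of $\tt'$.

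I would first dispatch (iii) and (iv), which are purely combinatorial. Because the boundary of $\mm^b$ is simple and canonically labelled by $\cro{0, 2m}$, and because $\tt' \in \PPT{m}$ has contour function $C$, applying the identification $G$ to the $2m$ oriented boundary edges is, by the contour bijection recalled in \eqref{e.EquivalenceC} and Figure~\ref{conttree}, exactly the inverse of reading $\tt'$ off $C$. Hence the $2m$ boundary edges pair up under $G$ into $m$ unoriented edges, their endpoints are identified to reproduce the vertex set of $\tt'$, and the corner-gluing procedure at each class in $V'$ reproduces the cyclic order of $\tt'$ around the corresponding vertex. This realises a subtree of $\mm$ isomorphic as a rooted plane tree to $\tt'$, which we take as our $\tt$. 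Since the root edge of $\mm^b$ lies on the boundary by definition of $\MSB{f}{m}$, its image under $G$ is an edge of $\tt$, and the convention that labelling starts at the root matches this edge with the root of $\tt'$, giving (iii) and (iv).

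The substantial step is (i), for which I would reverse the topological argument of \Cref{l.embedding1}. Fix the given embedding of $\mm^b$ in $\S^2$: the external face is an open topological disk $\mathring F$ bounded by the simple cycle of $2m$ boundary edges, so the complement $\overline{F^c} := \S^2 \setminus \mathring F$ is a closed disk containing $\mm^b$, with $\partial \overline{F^c}$ parameterized by $\cro{0, 2m}$. Separately, fix any planar embedding of $\tt'$ in $\S^2$; its complement is a simply connected open disk, and by the Riemann mapping theorem together with the prime-end extension cited in the footnote of \Cref{l.embedding1}, there is a continuous surjection $\pi : \overline{\D} \to \S^2$ that is a homeomorphism from the interior of $\D$ onto $\S^2 \setminus \tt'$ and that realises on $\partial \D$ precisely the contour-function quotient onto $\tt'$ with the correct cyclic order. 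Choosing any label-preserving homeomorphism $h : \overline{F^c} \to \overline{\D}$, the composition $\pi \circ h$ is continuous, injective on the interior of $\overline{F^c}$, and performs on $\partial \overline{F^c}$ exactly the identification $G$. It therefore pushes the embedding of $\mm^b$ forward to a planar embedding of $\mm$ in $\S^2$ inducing the cyclic orders at vertices of $V'$ prescribed in Section~\ref{s.gluing}.

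Point (ii) then follows from a count: each internal face of $\mm^b$ is transported homeomorphically by $\pi \circ h$ to a face of $\mm$, while the external face of $\mm^b$ is collapsed onto the tree $\tt$ and creates no new face, so $\mm$ has exactly $f$ faces. I expect the main obstacle to be verifying the topological identification in (i), namely that the prime-end extension of the Riemann map realises precisely the contour equivalence on $\partial \D$ and matches $h$'s labelling on $\partial \overline{F^c}$; once this topological fact is in place, the rest of the verification is bookkeeping that is already packaged in the combinatorial description of $\psi$.
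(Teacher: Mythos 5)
Your proposal is correct and follows essentially the same route as the paper: the combinatorial points (tree structure, root placement, face count) are handled by the contour-function identification exactly as in the bullet points preceding the lemma, and the planarity is obtained by the same Riemann-mapping/prime-end homeomorphism from a disk onto $\S^2\setminus\tt$ that the paper invokes by reference to Lemma \ref{l.embedding1}. Your write-up is merely more explicit than the paper's two-line proof, in particular in flagging that the boundary extension must realise precisely the contour equivalence.
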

\begin{proof}
	As $\tt$ is a subset of $\mm$, is a tree and has $m$ edges, we just need to prove that there exists an embedding that satisfies the order we imposed. This follows like in Lemma \ref{l.embedding1}, because of the existence of a orientation preserving isomorphism that goes from $\S^2\cap\{(x,y,z):x>0\}$ to $\S^2\backslash \tt$, and that takes any cyclically ordered prime ends of  $t$ in $\S^2\backslash \tt$ to elements of $\S^2\cap\{x=0\}$ keeping the same cyclical order.
\end{proof}

\begin{rem}
	Let us note that if we take $T$ to be a subset of the trees of size $m$, the gluing procedure produces a map decorated with a tree that belongs to $T$. This can be resume in the following lemma.
\end{rem}

\subsubsection{The gluing is actually a bijection} Let us now prove that the gluing and ungluing are inverse of each other.
\begin{prop}\label{p.bijection}
	For $f,m\in \N$, $\psi\circ \phi = Id$ in $\DTM{f}{m}$ and $\phi\circ \psi = Id$ in $\MSB{f}{m} \times \PPT{m}$.
\end{prop}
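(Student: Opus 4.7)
The plan is to verify both compositions directly by tracing how the two constructions act on vertices, edges, faces, and the root edge. The whole verification is mediated by one elementary fact about trees: for a plane tree $\tt$ with $m$ edges and a vertex $v$ of degree $d$, the equivalence class of contour times corresponding to $v$ via \eqref{e.EquivalenceC} is canonically in bijection with the $d$ corners of $\tt$ at $v$, and this bijection respects the cyclic order of the corners around $v$ as well as the boundary order $\cro{0,2m-1}$. I would establish this fact first, since it identifies the boundary label assigned to a corner by $\phi$ with the contour position used by $\psi$.

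For $\psi \circ \phi = \mathrm{Id}$ on $\DTM{f}{m}$, fix $(\mm, \tt)$. By construction $\phi$ preserves $\tt$ as a rooted tree, preserves the root edge of $\mm$, and preserves all vertices, edges, and faces of $\mm$ not incident to $\tt$. The only modification is that each vertex $v$ of $\tt$ of degree $d$ is replaced by its $d$ corners, which become the boundary vertices of $\mm^b$. Applying $\psi$ then quotients the boundary of $\mm^b$ by the contour equivalence of $\tt' = \tt$, and by the key fact above this equivalence reunites exactly the corners belonging to each original vertex, recovering $v$. The map $G$ in the definition of $\psi$ translates corner-endpoints back into original vertices, so every edge of $\mm$ is reproduced. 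The cyclic order around $v$ is also preserved, because both $\phi$ (when duplicating) and $\psi$ (when re-gluing corners in contour order, as in Figure~\ref{cornergluing}) read the corners in the same cyclic order around $v$.

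For $\phi \circ \psi = \mathrm{Id}$ on $\MSB{f}{m} \times \PPT{m}$, fix $(\mm^b, \tt')$. The gluing $\psi$ leaves the interior vertices, edges, faces, and the root edge of $\mm^b$ untouched, and identifies the boundary vertices according to the contour equivalence of $\tt'$; since the boundary is simple, each equivalence class yields a distinct new vertex of the subtree $\tt \subset \mm$. The boundary vertices identified to form a vertex $v$ of $\tt$ become precisely the corners of $\tt$ at $v$ in $\mm$, again by the key fact. Applying $\phi$ then un-glues each vertex of $\tt$ into its corners, labelled by their contour positions, returning exactly the original boundary of $\mm^b$.

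The main obstacle I expect lies in the cyclic-order verification: the key fact used above is not just a set-theoretic bijection but must respect the cyclic order of corners around each vertex of the tree, and this in turn must agree with the boundary ordering of $\mm^b$ and with the corner-gluing rule depicted in Figure~\ref{cornergluing}. Once this compatibility is pinned down, the rest of the verification reduces to bookkeeping: the root edge is preserved by both maps by construction, the off-tree part of the map is never modified, and each edge incident to $\tt$ is in natural bijection with its corner-indexed counterpart in $\mm^b$.
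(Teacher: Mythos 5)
Your proposal is correct and follows essentially the same route as the paper: a direct bookkeeping verification that vertices, edges, cyclic orders, and the root edge are preserved under both compositions, with the corner/contour-class correspondence doing the real work. Your explicit "key fact" identifying contour equivalence classes with corners (respecting cyclic order) is a slightly more careful articulation of what the paper's proof uses implicitly, and you additionally spell out the $\phi\circ\psi$ direction, which the paper leaves symmetric.
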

\begin{proof}
	Consider $\mm \in \DTM{f}{m}$, and let us show that $\psi(\phi(\mm))= \mm$. First note that the composition of both functions preserves the number of vertices. This is because, every vertex that does not belong to the decoration does not change by the transformations and the vertices on the decoration are separated by $\phi$ in corners and gathered by $\psi$ in exactly the same vertices of $\mm$.
	
	Let us now note that the edges are kept the same. This is because every edge without endpoints in the decoration remains after the transformations and edges with endpoints in the decoration again are unglued from the decoration by $\phi$ and glued after by $\psi$.
	
	Finally, we see that the orientation of the edges is kept. The reason for this is that for vertices that are not in the tree the orientation is kept the same, and for edges that hit in the tree, the orientation is carried by the tree that is the same in both cases.
\end{proof}
\begin{rem}
	Proposition \ref{p.bijection} can also be proven using that the isomorphisms taken in Lemma \ref{l.embedding1} and \ref{l.embedding2} may be chosen to be the inverse of each other.
\end{rem}

In fact, Lemma \ref{l.p-angulations} together with Proposition \ref{p.bijection} imply that the gluing-ungluing procedure is also a bijection when instead of fixing the number of faces, one fixes the number of edges in the map, which justifies \Cref{theo2}. They also imply that this is a bijection when restricted to $q$-angulations. 
 
\begin{prop}\label{p.q-angulations}
	$\phi$ is a bijection between $m$-tree decorated $q$-angulations with trees of $m$-edges and $q$-angulations with boundary of size $2m$.
\end{prop}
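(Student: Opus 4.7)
The plan is to derive this proposition essentially as a corollary of what has already been established. By Proposition \ref{p.bijection}, we already know that $\phi$ is a bijection between $\DTM{f}{m}$ and $\MSB{f}{m} \times \PPT{m}$ with inverse $\psi$, so the only thing left to check is that this bijection restricts correctly to the $q$-angulated setting on both sides.

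First I would invoke Lemma \ref{l.p-angulations} directly: it already states that $\phi$ sends an $m$-tree decorated $q$-angulation to a pair consisting of an $m$-edge tree and a $q$-angulation with boundary of size $2m$. This gives the ``forward'' half, namely that the image of $\phi$ restricted to the subset of $m$-tree decorated $q$-angulations is contained in $\PPT{m} \times (\MSB{f}{m}\cap\{q\text{-angulations}\})$.

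Next I would verify the ``backward'' half, i.e., that $\psi$ sends a pair $(\mm^b, \tt')$ with $\mm^b$ a $q$-angulation with simple boundary of size $2m$ and $\tt'\in\PPT{m}$ to an $m$-tree decorated $q$-angulation. The key observation is that the gluing procedure, as described in Section \ref{s.gluing}, only identifies vertices and edges along the boundary of $\mm^b$; it neither merges nor subdivides any internal face. More precisely, following the construction of $\psi$, each internal face of $\mm^b$ corresponds to a face of $\mm$ of the exact same degree (the only face that disappears is the external one, replaced by the embedded tree $\tt$). Hence if every internal face of $\mm^b$ has degree $q$, so does every face of $\mm$, making $\mm$ a $q$-angulation.

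Putting these two observations together, the bijection of Proposition \ref{p.bijection} restricts to the desired bijection. The main (very minor) obstacle is making the face-preservation argument under $\psi$ precise — this essentially amounts to reading off from the explicit construction of $\psi$ that internal faces are in one-to-one degree-preserving correspondence with the faces of the glued map, which is the symmetric counterpart to Lemma \ref{l.p-angulations} and follows from the same corner-by-corner analysis.
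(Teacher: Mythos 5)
Your proposal is correct and follows essentially the same route as the paper, which derives \Cref{p.q-angulations} directly from Lemma \ref{l.p-angulations} together with Proposition \ref{p.bijection}, using exactly the observation that the gluing/ungluing only creates or removes the external face and leaves the degrees of all internal faces unchanged. Your explicit check that $\psi$ also preserves the $q$-angulation property (the ``backward'' half) is a worthwhile point that the paper leaves implicit, but it is not a different argument.
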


Furthermore, as the ungluing keeps the tree without any changes we obtain the following probabilistic result.

\begin{cor}\label{c.uniform}
The tree of a uniformly chosen $m$-tree decorated map (or  $q$-angulation) with $e+m$ edges is uniformly chosen between all of the trees of size $m$.
\end{cor}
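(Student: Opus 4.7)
The plan is to read the statement off directly from the bijection, since the corollary is essentially a statement about marginals of a uniform distribution on a Cartesian product. The key observation is that the map $\phi$ constructed in Section \ref{s.b} does not alter the decorating tree at all: by construction, in $\phi(\mm,\tt) = (\mm^b, \tt')$ one has $\tt' = \tt$ as a planted plane tree, with the same root edge. So the second coordinate of $\phi$ is literally the projection onto the tree.

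First, I would fix $e,m \in \N$ and apply the edge-version of the bijection (Theorem \ref{theo2}, which follows from \Cref{p.bijection} together with \Cref{l.p-angulations}): this gives a bijection
\[
\phi : \DTM{e+m}{m} \longrightarrow \MSB{e}{m} \times \PPT{m},
\]
where I am using the edge-indexed version of $\MSB{\cdot}{\cdot}$. Since $\phi$ is a bijection, the pushforward of the uniform measure on $\DTM{e+m}{m}$ is the uniform measure on the product $\MSB{e}{m} \times \PPT{m}$. A uniform measure on a Cartesian product is a product of uniform measures, so its marginal on $\PPT{m}$ is exactly the uniform measure on trees with $m$ edges. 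Because the second coordinate of $\phi$ is the decorating tree itself, this marginal is precisely the law of the tree $\tt$ under a uniform pick of $(\mm,\tt)$.

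For the $q$-angulation case, I would use \Cref{p.q-angulations} to restrict $\phi$ to a bijection between $m$-tree decorated $q$-angulations with $e+m$ edges and pairs made of a tree of size $m$ and a $q$-angulation with simple boundary of length $2m$ and $e$ interior edges. The same marginalisation argument then yields the conclusion in this restricted setting. (If one prefers to work with faces fixed, exactly the same argument applies via \Cref{theo1}.)

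There is no real obstacle here: the whole content is in the fact, already established in Section \ref{s.b}, that $\phi$ preserves the tree component on the nose, so the only thing to check is the trivial probabilistic statement that marginals of uniform product distributions are uniform. The only mild care point is to use the correct version of the bijection (edges vs.\ faces, general maps vs.\ $q$-angulations) in each case; this is exactly what \Cref{p.bijection}, \Cref{l.p-angulations} and \Cref{p.q-angulations} are designed to provide.
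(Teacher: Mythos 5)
Your proof is correct and is essentially the paper's own argument: the authors justify \Cref{c.uniform} with the single observation that ``the ungluing keeps the tree without any changes,'' which is exactly the marginalisation-of-a-uniform-product argument you spell out. Your version is just a more explicit write-up of the same reasoning, correctly routed through the edge-counting version of the bijection and \Cref{p.q-angulations} for the $q$-angulation case.
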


\subsection{Extensions} In the following section, we discuss some extensions of the gluing procedure which allows us to make bijections between other combinatorial objects. 
\subsubsection{\tmb} In this subsection, we discuss what happens when gluing a tree to a map with a boundary that is bigger than the contour of the tree, and how this allows to create a dynamic gluing of the boundary.

Let us consider the gluing procedure described in Section \ref{s.gluing}, but now, let us glue a tree with a contour smaller than the boundary of the exterior face. Note that in this case the result is that the exterior face has shrunk but not disappeared. Furthermore, the glued part becomes a tree that shares only one vertex with the exterior face. This allows us to conclude the following proposition.

\begin{prop}\label{prop2}
	Let $f,m_1,m_2\in \N$. The set of tree-decorated map with a simple boundary, $\TMSB{f}{m_1}{m_2}$, is in bijection with the Cartesian product between trees and maps with  a simple boundary, $\PPT{m_2}\times\MSB{f}{m_1+2m_2}$.
\end{prop}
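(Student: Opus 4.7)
The plan is to adapt the ungluing/gluing construction of Section~\ref{s.b} to the setting where the map already carries a simple boundary, applying the same local recipe around the tree and then checking that the pieces fit together correctly.

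For the ungluing direction, starting from $(\mm^b,\tt) \in \TMSB{f}{m_1}{m_2}$, I would apply the same local construction as $\phi$ from Section~\ref{s.b} to the subtree $\tt$ sitting inside $\mm^b$: split each vertex of $\tt$ into one corner-vertex per tree-corner, replace each tree edge by its two oriented copies between the appropriate corner-vertices, and keep every other edge with its tree-endpoints relabelled by the enclosing corner. The output would be the tree $\tt'$ (an unchanged copy of $\tt$) together with a map $\tilde{\mm}^b$ whose internal faces coincide with those of $\mm^b$ (so the internal-face count $f$ is preserved, as only the exterior face is modified).

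The crux --- and the step I expect to be the main obstacle --- is to verify that $\tilde{\mm}^b$ still has a \emph{simple} boundary, and to identify its size. The key geometric observation is that $\tt$ meets the existing boundary of $\mm^b$ only at its root $v_0$, so ``cutting along $\tt$'' creates a slit that attaches to the exterior face at the single corner of $v_0$ facing the old exterior. Walking around the new exterior face, one traverses the original simple boundary until reaching $v_0$, then winds once around the unglued tree (contributing $2m_2$ new boundary edges corresponding to the $2m_2$ corners of $\tt$), then continues along the remainder of the original boundary. Simplicity of each of the two pieces individually --- the original boundary by hypothesis, and the tree-contour boundary by \Cref{l.embedding1} applied to $\tt$ alone --- together with the fact that they meet at the single point $v_0$, would then imply simplicity of the new boundary.

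For the gluing direction, given $(\tt', \tilde{\mm}^b)$ in the target set, I would run a partial version of $\psi$: label the boundary edges of $\tilde{\mm}^b$ starting from the root, and glue only the first $2m_2$ boundary edges using the equivalence~\eqref{e.EquivalenceC} induced by the contour function of $\tt'$. The identified segment collapses into an inscribed copy of $\tt'$ whose root lands on the surviving boundary, and the remaining boundary edges together with that identified root form a simple cycle that becomes the new exterior boundary. Simplicity here reduces to the observation that the unglued portion is a sub-path of the original simple boundary whose only identification is of its two endpoints into the tree root; the planar embedding is produced as in \Cref{l.embedding2}. Mutual inversion then follows by the same local argument as \Cref{p.bijection}, since the corner-splitting and the contour-equivalence are designed as inverse operations around the tree, and the only novelty relative to \Cref{theo1} is the bookkeeping at $v_0$, which stays local because $\tt$ has a unique boundary vertex.
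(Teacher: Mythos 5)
Your proposal is correct and follows essentially the same route as the paper, which simply observes that running the gluing of Section~\ref{s.gluing} with a tree whose contour ($2m_2$ edges) is shorter than the boundary shrinks the exterior face without destroying it and leaves the glued tree attached to the surviving boundary at a single vertex; you have merely spelled out both directions and the simplicity check that the paper leaves implicit. Your bookkeeping (the exterior face gains exactly $2m_2$ edges under ungluing, and the two endpoints of the unglued boundary arc are identified to the tree root under gluing) is exactly the content of the paper's sketch.
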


In this moment, let us present a fact about gluing that we believe may become useful in the future, especially when working with local limits. This fact is going to be presented in probabilistic terms, as it may be its main use.

Take $x\in \cro{0,m-1}$ and a contour function $C$, i.e., a Dyck path. Then, for every level $l\in \N$ smaller than the value of $C(x)$ there is a unique connected subset $\cro{\underline{m}(x,l),\overline{m}(x,l)}\ni x$ where $C(\cdot)-l$ is a Dyck path and
\begin{equation}
l=C(\underline{m}(x,l))=C(\overline{m}(x,l)).
%=C(\underline{m}(x,l))+1=C(\overline{m}(x,l))+1.	
\end{equation}
Note that the set of contour functions where this set is exactly $[\underline{m},\overline{m}]$ can just be reconstructed by the Cartesian product between a positive path going from $0$ to $l-1$ in $\underline{m}-1$ steps and a positive path going from $l-1$ to $0$ in $\overline{m}-1$ steps.

Looking from a tree point of view, this decomposition allows us to progressively recover all the vertices of the tree whose common ancestors with $x$ is at distance at least $l$ from the root. This allow us to define a progressive procedure of gluing trees, where at time $t \in \cro{0,l-1}$ we glue the sub-tree whose vertices live in $\cro{\underline{m}(x,l-t),\overline{m}(x,l-t)}$ with a given map having a simple boundary of size $2m$. 

From a probabilistic perspective these remarks imply the following result.
\begin{prop}\label{prop:dyn}
	Let $\mm^b$ be a map with a boundary of size $2m$ and $n$-vertices, chosen uniformly at random in $\MSB{f}{m}$ and $\tt$ be a tree of size $m$, also chosen uniformly at random in $\T{m}$. Additionally, take $x,l,\underline{m},\overline{m}\in \N$ with $x\in \cro{0,m-1}$. Call $\mm_t$ the map generated by gluing the sub-tree generated by the vertices $\cro{\underline{m}(x,l-t),\overline{m}(x,l-t)}$ with $\mm^b$. Conditionally on the event \[\{\cro{\underline{m}(x,l-t),\overline{m}(x,l-t)}=\cro{\underline{m},\overline{m}}\},\] the law $\mm_t$ is uniform over the $(\underline{m}-\overline{m})/2$-tree decorated maps with a simple boundary of size $m-(\underline{m}-\overline{m})$ and whose root is at cyclic distance $\min\{\underline{m},m-\overline{m}\}$ of the tree. Furthermore, under this conditioning,  the interior of $\mm_l$, the boundary of $\mm_t$, and the boundary $\mm_l$ minus the tree generated at time $t$ are all independent.
\end{prop}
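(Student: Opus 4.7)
The plan is to derive \Cref{prop:dyn} by combining the extended gluing bijection of \Cref{prop2} with the classical independent decomposition of a uniform Dyck path at a prescribed height. The proof has three natural steps: decompose $\tt$ under the conditioning, identify $\mm_t$ as the image of a restricted gluing, and read off each random object in the independence claim as a function of a distinct independent input. The only mildly delicate point will be the bookkeeping in the last step (and matching notation with the statement, which seems to contain minor sign and factor-of-$2$ typos).

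First I would fix the conditioning and decompose $\tt$. The event $E=\{\cro{\underline{m}(x,l-t),\overline{m}(x,l-t)}=\cro{\underline{m},\overline{m}}\}$ is measurable with respect to $\tt$ alone, and on $E$ the contour $C$ of $\tt$ hits the height $l-t$ precisely at the times $\underline{m}$ and $\overline{m}$ inside the maximal interval around $x$ on which $C-(l-t)$ is a Dyck path. Under the uniform law on $\PPT{m}$ conditioned on $E$, the standard cut-at-a-height decomposition of a Dyck path splits $C$ into two conditionally independent pieces: an outer piece $C_1$, obtained by concatenating $C_{|\cro{0,\underline{m}}}$ and $C_{|\cro{\overline{m},2m}}$ (each constrained to visit $l-t$ only at their endpoints $\underline{m}$, $\overline{m}$), and an inner excursion $C_2(\cdot):=C(\underline{m}+\cdot)-(l-t)$ on $\cro{0,\overline{m}-\underline{m}}$. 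Translating back to trees, $\tt$ decomposes into a trunk $\tt_1$ and a sub-tree $\tt_2$ of $(\overline{m}-\underline{m})/2$ edges, rooted at the common ancestor associated with time $x$ at height $l-t$; under $E$ the pair $(\tt_1,\tt_2)$ is conditionally independent and $\tt_2$ is uniform in $\PPT{(\overline{m}-\underline{m})/2}$.

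Second I would identify $\mm_t$ with the image of $(\mm^b,\tt_2)$ under the restricted gluing of \Cref{prop2}, with parameters $m_1=m-(\overline{m}-\underline{m})$ and $m_2=(\overline{m}-\underline{m})/2$. Indeed, $\mm_t$ is built by identifying the boundary edges of $\mm^b$ with labels in $\cro{\underline{m},\overline{m}-1}$ using the equivalence relation induced by $C_2$, which is exactly the restricted gluing of \Cref{prop2} applied to $\mm^b$ and $\tt_2$; the remainder of the boundary of $\mm^b$ is untouched. Since $\mm^b$ is uniform in $\MSB{f}{m}$, independent of $\tt$, and since $\tt_2$ is under $E$ uniform in $\PPT{m_2}$, the pair $(\mm^b,\tt_2)$ is uniformly distributed in $\MSB{f}{m}\times\PPT{m_2}$, and the bijection of \Cref{prop2} yields uniformity of $\mm_t$ in $\TMSB{f}{m_1}{m_2}$. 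The root of $\mm_t$ is inherited from the root of $\mm^b$ at position $0$ of its boundary, and $\tt_2$ is attached at the identified vertex $\underline{m}\sim\overline{m}$, so the cyclic distance between the two along the new boundary is $\min\{\underline{m},\,2m-\overline{m}\}$ (which matches the quantity in the statement up to an apparent typo).

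For the independence, I would finally read each of the three random objects as a function of a distinct one of the independent inputs $\mm^b,\tt_2,\tt_1$ available under $E$. The interior of $\mm_l$ (its sub-map formed by the internal faces and their boundaries) coincides with the interior of $\mm^b$, because every subsequent gluing step only alters the boundary; hence it is a function of $\mm^b$ alone. The boundary of $\mm_t$ together with the attached $\tt_2$ is, by the identification of the second step and the fact that its length is fixed on $E$, a function of $\tt_2$ alone. The third object, the ``boundary of $\mm_l$ minus the tree generated at time $t$'', is by construction the trunk $\tt_1$. The hypothesis gives that $\mm^b$ and $\tt$ are independent, and the first step gives that $\tt_1$ and $\tt_2$ are conditionally independent on $E$; the triple $(\mm^b,\tt_2,\tt_1)$ is therefore mutually independent under $E$, and so are the three objects of the proposition.
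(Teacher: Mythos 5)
Your proof is correct and follows the route the paper intends: the paper gives no formal proof of \Cref{prop:dyn}, asserting that it follows from the preceding remarks on decomposing a Dyck path at a prescribed level into independent, uniform pieces and then applying the restricted gluing of \Cref{prop2}, which is precisely what you formalize (including the sensible reading of the statement's typos). The only slip is the parenthetical description of the outer piece $C_1$ --- the constraint defining the event is that $C(\underline{m}-1)=C(\overline{m}+1)=l-t-1$, not that $C_1$ avoids the level $l-t$ away from $\underline{m},\overline{m}$ --- but since the conditioning event is still a product set, this does not affect the conditional independence or the uniformity of the inner excursion.
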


In more probabilistic words, the maps $\mm_t$ works as a renewal submap for the gluing process of the tree.

\subsection{Gluing with a non-simple boundary}\label{sec:nonsimple}
To finish this section, we would like to discuss what happens when the boundary of the map we are gluing is not simple. The combinatorial objects that appear do not seem canonical, so we discuss with more emphasis the complication that make them arise.

We mostly focus on the case, where the boundary is bridgeless. In that case, we can make sense of a generalization of the gluing procedure. On the other hand, we will finish this section by explaining why it is not possible to make sense of the gluing procedure when the boundary has bridges.  

Let us start discussing the gluing between a map with a bridgeless boundary and a tree. Note that, in this case, the resulting glued ``map'' has a decoration that is not necessarily a tree, but only a submap (see Figure \ref{bubbles}). This generates two problems. The first one comes from the fact that two different gluing may give the same glued map with the same submap. This is not a central problem as it can be fixed by, instead of considering the decoration as  a submap, one considers it as a non-self crossing circuit, defined in \Cref{s.elementary def}. The circuit is just the image of the contour walk of the tree under the gluing, and the fact that the boundary was bridgeless, implies that the circuit only passes once by the same oriented-edge.
 
The second problem that arises in the gluing is the main one. The truth is that there are, in fact, two types of cycles that may appear in the circuit.  To describe them, let us, first, note that cycles can only appear in vertices that come from a non-simple vertex of the boundary. Thus, there are two possibilities for the image under the gluing of this vertex. Either, the tree glue it only with different points of the boundary. These generate harmless cycles. On the other hand, if the tree glue this vertex to itself, i.e., the vertex is associated to two corners belonging to the same equivalent class in the tree, then `wicked' cycles appear. That is to say, this vertex pinches down the sphere, and the map cannot longer be embedded in the sphere but only in a topological space with bubbles.
 
Let us explain, in a better way, what happens with the wicked points, for that it is useful to assume that the map with a boundary is already embedded in the sphere. Let $x$ be a wicked point. We know that if we retire $x$, the map with a boundary is left with two or more connected components of interior faces. Note that  two faces, $f_1,f_2$ made by edges in two different connected components can only be connected through a continuous path in the sphere that either pass through $x$ or pass through the exterior face of the map. After the gluing has been done, the exterior face disappears. Thus, any continuous path that goes from the image of $f_1$ to that of $f_2$ has to pass by $x$, as the tree gives no useful identification between these edges.  Thus, \textit{the resulting glued ``map'', is not a map} since it cannot be embedded in the sphere: removing the point $x$ results in a disconnection of the faces, which never happens in $\S^2$.

Additionally note that the bubbles are connected in an arborescent way, as they do not form cycles. This is because, each bubble is associated with a starting point and a subtree of the original tree, where two subtrees can only intersect in at most one vertex (See Figure \ref{bubbles}). Let us call bubble-maps any rooted graph that can be embedded in such a tree like topology.
 \begin{figure}[!h]
 	\begin{subfigure}{0.49\textwidth}
 		\centering
 		\scalebox{1}[1]{\includegraphics[scale=0.8]{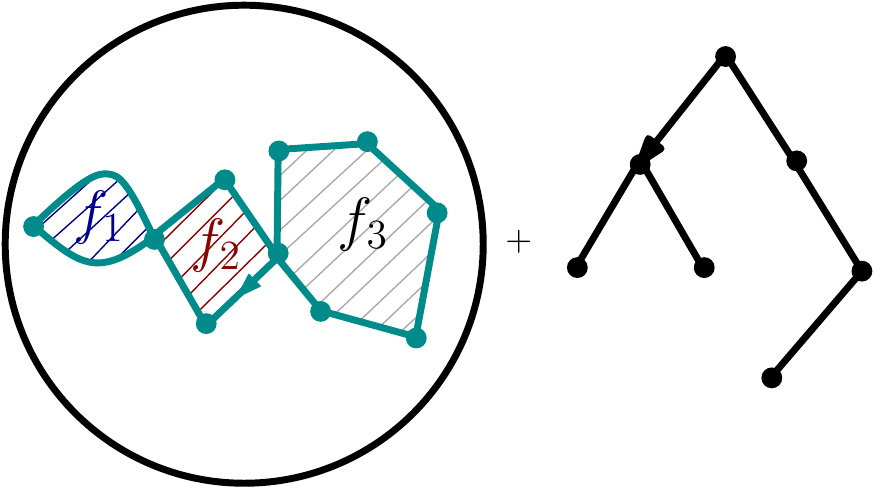}}
 	\end{subfigure}
 	\hspace{-0.75cm} $\longleftrightarrow$ \hspace{-0.25cm}
 	\begin{subfigure}{0.49\textwidth}
 		\centering
 		\scalebox{1}[1]{\includegraphics[scale=1.3]{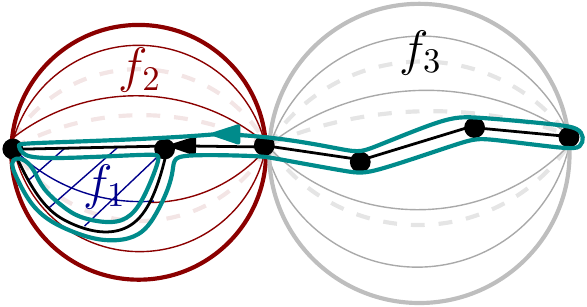}}
 	\end{subfigure}
 	% \vspace{-0.6cm}
 	\caption{{\Small Left: Bridgeless map with a non-simple boundary (interior faces are filled) and a tree. \\[2pt]
 			\noindent Right: Bubbles (3D plot) form by the gluing of a map with non-simple boundary and a tree. We chose to leave the scarf generated by the tree after the gluing (black), even though the decoration is the green circuit (oriented edges following the sense of the root edge from the root edge). }}
 	\label{bubbles}
 \end{figure}
 
 Furthermore, let us note that the image of the path given by the walker in the tree, generates a non-self crossing circuit that passes exactly two times by each edge. From this circuit, it is possible to recover the original tree structure. Let us describe an algorithm that allows to recover the contour function, $C$, of the tree by using the circuit. Start at $n=0$ and $C(0)=0$ from the root-vertex and following the root-edge, and iterate for each new edge of the circuit:
 \begin{itemize}
 	\item If the edge has been visited, set  $C(n+1)=C(n)-1$.
 	\item If this is the first time the edge has been visited and it visits a new vertex, set $C(n+1)=C(n)+1$.
 	\item If this is the first time the edge has been visited and it visits an already visited vertex, set $C(n+1)=C(n)+1$ and create a new non-self crossing circuit, where the visited vertex is duplicated. All edges visited before time $n$ (not including the one in this step), goes with the vertex going to the right, and all the others go with the vertex going to the left. At distance $\epsilon>0$ from this point, the graph is embedded homeomorphically as before. This can be done because the circuit is not self-crossing.
 	\item Set $n=n+1$.
 \end{itemize}
 Let us, now, define the image set of the gluing of a tree with a bridgeless map with a boundary. For $f,m\in \N$, we say that $(\mm,\cc)\in \PC{f}{m}$ if: $\mm$ is a bubble-map of $f$ faces, $\cc$ is a non-crossing circuit with length $2m$, going trough each edge twice, passing by every pinched point of the bubble-map and containing the root edge. Let us also define $\PCE{e}{m}$ as $\PC{f}{m}$, where in the place of $f$ faces, we consider $e+m$ edges.
 
 Let us summarize the discussion.
 \begin{prop}\label{prop:nonsimple}
 	When one glues a tree $\tt' \in\T{m}$ with map with a (non-simple) bridgeless boundary $\mm^b\in \MB{f}{m}$, one only obtains a map if there is no vertex where the boundary of $\mm^b$ is not simple and that is identified by $\tt'$.
 	
 	More importantly, the  gluing is a bijection between $ \PPT{m}\times \MB{f}{m}$ and $\PC{f}{m}$.
 \end{prop}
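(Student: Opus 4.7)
The approach is to extend the gluing/ungluing bijection of Section \ref{s.b} from the simple-boundary case to the bridgeless-boundary case, accepting that the glued object is now a bubble-map rather than a sphere-embedded map. The first statement of the proposition will follow directly from tracing the same identification procedure and identifying what produces a bubble; the bijection between $\PPT{m}\times \MB{f}{m}$ and $\PC{f}{m}$ will be a bookkeeping extension of Proposition \ref{p.bijection}.

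For the first claim, consider the gluing $\psi$ of Section \ref{s.gluing} applied to $(\mm^b,\tt')$: the boundary vertices of $\mm^b$ are identified via the equivalence classes from the contour of $\tt'$. If $v$ is a non-simple vertex of the boundary of $\mm^b$ that is identified with itself by $\tt'$ (i.e., two of its corners lie in the same class), then removing the image of $v$ after gluing disconnects the internal faces that lay on the different ``sides'' of $v$ in $\mm^b$, which were previously connected only through the external face that no longer exists. This obstructs any sphere embedding, giving the first half of the statement.

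For the bijection, I would define $\psi:\PPT{m}\times \MB{f}{m}\to \PC{f}{m}$ by the same identification procedure and let $\cc$ be the image of the boundary walk of $\mm^b$. Since $\mm^b$ is bridgeless, the boundary walk traverses each boundary edge once, and identification of boundary edges in pairs by the contour of $\tt'$ makes $\cc$ a closed walk of length $2m$ that traverses each of its edges exactly twice; non-self-crossing follows from the planar corner-splicing rule of Figure \ref{cornergluing}, and $\cc$ visits every pinch and contains the root by construction. The inverse $\phi$ uses the algorithm from the excerpt: walking along $\cc$ produces a Dyck path $C$ (hence a tree $\tt'\in \PPT{m}$) since each edge is traversed exactly twice (giving $m$ up-steps and $m$ down-steps) and the first/second-visit convention forces $C$ to remain non-negative; simultaneously, each revisit triggers a vertex duplication that un-pinches the bubble-map. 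Declaring the region bounded by the un-pinched image of $\cc$ to be the external face and splitting circuit-edges into pairs of boundary edges yields $\mm^b\in \MB{f}{m}$, whose boundary is bridgeless because each boundary edge is traversed once by the boundary walk.

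The main obstacle is verifying that $\phi$ and $\psi$ are mutually inverse, which reduces to showing that the local cyclic structure at each pinch point (respectively identified vertex) is correctly matched by the two constructions. Edges and vertices disjoint from $\cc$ are preserved identically. At each identified vertex, $\psi$ splices the pre-image corner orders in the order given by the contour equivalence class; at each revisit of a visited pinch, $\phi$ duplicates the vertex with the rule \emph{edges visited before time $n$ go with one copy and the remaining ones with the other}. The key point is that the non-self-crossing property of $\cc$ forces the already-visited and yet-to-be-visited edges around each pinch to form contiguous cyclic arcs, so this splitting is unambiguous and exactly inverts the splicing. Once this local matching is established at every pinch, the global bijection follows from the same sphere embedding arguments as in Lemmas \ref{l.embedding1} and \ref{l.embedding2}.
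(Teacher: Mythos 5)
Your proposal is correct and follows essentially the same route as the paper: the first claim via the pinching/disconnection argument at non-simple boundary vertices identified by the tree, and the bijection via the same corner-identification gluing together with the circuit-walk algorithm (first visit $+1$, second visit $-1$) to recover the tree and edge-duplication plus local un-pinching to recover the bridgeless-boundary map. You in fact spell out slightly more of the verification that the two maps are mutually inverse (the contiguity of visited versus unvisited arcs around a pinch point) than the paper does.
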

\begin{proof}
	Given the above discussion, we only need to show that we can perform the ungluing. Let us note that the tree can be recovered from the algorithm described above. To recover the map with a boundary, one just needs to duplicate the edges crossed as described in Figure \ref{dectoboundandtree1}. Now, we only have to explain what needs to be done close to the pinch points. In those points, one just needs to locally modify the underlying space so that the point have thickness and all the newly added points belong to the internal face.
\end{proof} 

\begin{rem}\label{rem:non-simple}
	Again, the bijection does not modify the degree of inner faces, so the same statement is valid when we restrict our attention to bubble $q$-angulations.
\end{rem}

As discussed in the introduction, this bijection is not that useful in the case of maps with a given number of faces. However, it can be adapted to families of maps with a given number of edges. 

 \begin{prop}\label{prop:nonsimpleedge}
	The set $\PCE{e}{m}$ is in bijection with the cartesian product of $\T{m}$ with the set of map with (non-simple) bridgeless boundary with $e$ internal edges and boundary of size $2m$.
\end{prop}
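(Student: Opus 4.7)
The plan is to adapt the bijection of Proposition \ref{prop:nonsimple} by replacing the face-count parameter with an edge-count parameter, in direct analogy with the way Theorem \ref{theo2} was obtained from Theorem \ref{theo1}. The key observation is that the gluing and ungluing procedures of Section \ref{s.b}, and likewise their extensions in Section \ref{sec:nonsimple}, are purely local: they only duplicate or identify those edges that belong to the decorating circuit $\cc$ (in the bubble-map direction) or to the boundary of $\mm^b$ (in the other direction), and leave every other edge, vertex, incidence and cyclic order strictly untouched. In particular, the number of interior edges is preserved in both directions.

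Concretely, starting from $(\mm,\cc)\in \PCE{e}{m}$, I would apply the ungluing described in Section \ref{sec:nonsimple}: first recover the tree $\tt' \in \T{m}$ from $\cc$ via the contour-recovery algorithm, and then duplicate each of the $m$ edges carrying $\cc$ into two boundary edges, producing a boundary of size $2m$. Since $\mm$ has $e+m$ edges in total and $\cc$ uses exactly $m$ of them, the remaining $e$ edges are untouched and become the interior edges of the resulting map with boundary; moreover the resulting boundary is bridgeless because $\cc$ passes through each of its edges twice. Conversely, starting from $(\tt',\mm^b)$ with $\tt'\in \T{m}$ and $\mm^b$ a bridgeless boundary map of boundary size $2m$ and $e$ interior edges, the gluing pairs up the $2m$ boundary edges of $\mm^b$ into $m$ new edges decorated by a non-self-crossing circuit of length $2m$, and leaves the $e$ interior edges intact, yielding an element of $\PCE{e}{m}$ (by Remark \ref{rem:non-simple} this is consistent on each face degree, and here we simply track edges instead).

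Finally, the fact that these two procedures are mutual inverses is inherited directly from Proposition \ref{prop:nonsimple}, since the inverse property was established there locally, vertex-by-vertex, edge-by-edge and via the explicit contour-recovery algorithm, with no reliance on whether the family is parameterised by internal faces or by internal edges. The main point that needs careful verification is the edge bookkeeping around the pinch points of the bubble-map together with the equivalence between bridgelessness of the boundary of $\mm^b$ and the defining condition of $\PCE{e}{m}$ that $\cc$ traverses each of its edges exactly twice; however, this equivalence is essentially the same observation that underlies Proposition \ref{prop:nonsimple}, so no genuinely new difficulty arises.
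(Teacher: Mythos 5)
Your proposal is correct and follows essentially the same route as the paper, which states this proposition without a separate proof precisely because it is the bijection of Proposition \ref{prop:nonsimple} combined with the observation (already used to pass from Theorem \ref{theo1} to Theorem \ref{theo2}) that the gluing/ungluing only affects the $m$ circuit edges (respectively the $2m$ boundary edges) and leaves the $e$ interior edges untouched. Your explicit edge bookkeeping ($e+m$ total edges of the bubble-map splitting as $m$ circuit edges plus $e$ interior ones, matching $e$ interior edges plus a size-$2m$ boundary after duplication) is exactly the intended justification.
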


Let us finish this section by describing what happens when the boundary have bridges. In this case, we do not only have problems with identification of vertices, but also with the identification of edges. In particular, there may be two bridges in the boundary having oriented edges identified in the tree. This makes that the circuit we create passes at least two times by the image of that oriented edge, and thus, it  makes it impossible to reverse the gluing.

 \begin{figure}[!h]
	\begin{subfigure}{0.49\textwidth}
		\centering
		\scalebox{1}[1]{\includegraphics[scale=0.8]{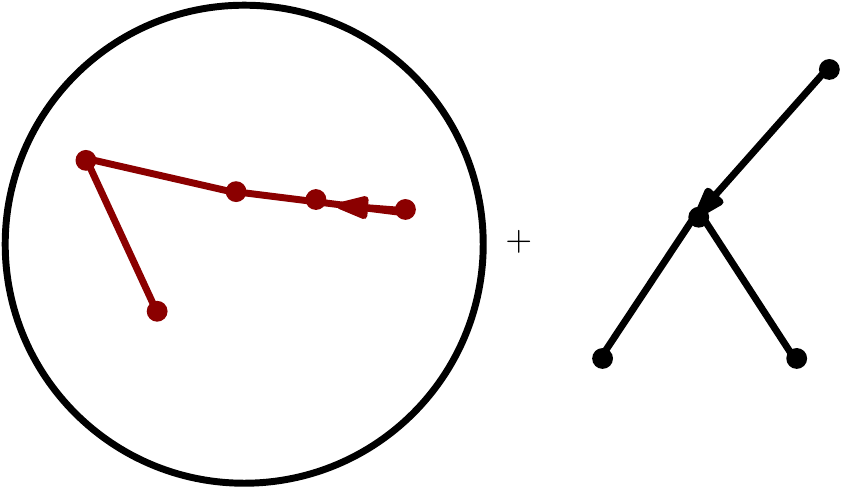}}
	\end{subfigure}
	\hspace{-0.75cm} $\longleftrightarrow$ \hspace{-0.25cm}
	\begin{subfigure}{0.49\textwidth}
		\centering
		\scalebox{1}[1]{\includegraphics[scale=1.5]{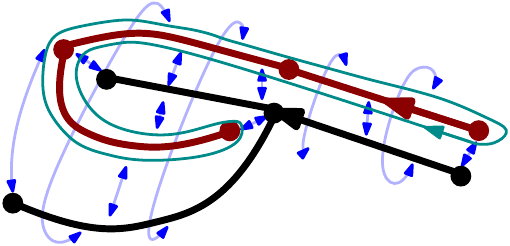}}
	\end{subfigure}
	% \vspace{-0.6cm}
	\caption{{\Small Left: Bridge map with a non-simple boundary (red tree) and a tree to glue. \\[2pt]
			\noindent Right: The gluing of the objects on the left. As in \cref{bubbles} the green part represents the circuit (each edge in the red tree appears twice as we follow the root edge on the boundary of the root-face). We put blue lines for identifications made by the gluing. The edges of the black tree that do not contain the root-edge generate only one edge $e_1$, after the gluing, what makes the green circuit visit four times $e_1$.}}
	\label{bubbles2}
\end{figure}
 
\section{Countings}\label{count}
In this section, we discuss how the bijections presented are translated in to counting formulae. Before stating the results, we need to introduce a re-rooting procedure, which  allows us to obtain formulas for decorated maps whose root is not necessarily in the decoration.

For convenience, in this section, we write $(q)$ for a family of maps to consider the case where all the inner faces are $q$-angulations. 
\subsection{Re-rooting procedure}\label{rerooting}
In this section, we explain how the re-rooting procedure works for $r$-forest $q$-angulations. The main result of this section is the fact that
\begin{equation}\label{e.rooting}
|\DFM{f}{m_1,..,m_r}(q)|m(f,q)=|\DFMA{f}{m_1,..,m_r}(q)| r!\left( \prod_{i=1}^{r} 2m_i\right)\left(\prod_k c_k! \right)^{-1} ,
\end{equation}
where $m(f,q)$ is the amount of edges of a $q$-angulation with $f$ faces, and  $c_k$ counts the multiplicity of the elements of identical $m_i$, i.e., 
\begin{equation*}
c_k:=|\{ i\in\cro{1,r}: m_i=k  \}|.
\end{equation*}

Let us now deduce Equation \eqref{e.rooting}. It just follows from counting a slightly bigger set. Let us call \begin{equation*}
\DFMAG{f}{m_1,..,m_r}(q):= \left \{(\mm,\tt_1,...,\tt_r,\vec{e}):(\mm,\tt_1,...,\tt_r)\in \DFM{f}{m_1,..,m_r}(q) \text { and $\vec{e}$ is an oriented edge of $\mm$}\right \}. 
\end{equation*}
 In other words, an element of $\DFMAG{f}{m_1,..,m_r}(q)$ is made of an element of $\DFM{f}{m_1,..,m_r}(q)$ together with an additional marked edge. As any automorphism of a map that fix one oriented edge, fixes all of them, we have that we can compute the cardinal of $\DFMAG{f}{m_1,..,m_r}(q)$ in two ways. The first one is by multipling the amount of possible values of $\vec{e}$ by the cardinal of $\DFM{f}{m_1,..,m_r}(q)$, this gives the left-hand side of \eqref{e.rooting}. The other one, is to multiply the cardinal of $\DFMA{f}{m_1,..,m_r}(q)$ by the possible position of the root of each tree, and then by all possible orders of these $r$ trees, this gives the right-hand side of \eqref{e.rooting}.

\begin{rem}
	It is important to remark that we can do this procedure as soon as we work with two types of rootings of the same family of maps. The justification is subtle since if one unroots a given map, certain symmetries may appear, breaking down the argument. Instead, when distinguishing more than one edge symmetries do not appear and this type of identities follow.
\end{rem}

\subsection{Counting relation between maps with a boundary and maps with a simple boundary}
\label{ssec:sbound}

The main interest of this section is to compute the generating functions of the maps with a simple boundary, as they appear in the bijection presented in Section \ref{s.b}. To do it, we are going to adapt the technique introduced in \cite{BG09} that were used to link the generating function of quadrangulations with a boundary to that of quadrangulations  with a simple boundary.

Let us start by noting that the set of maps with a simple boundary and $f$ faces is infinite. Instead, one needs to specify the number of edges and the size of the boundary. 

We define the following generating functions
\begin{align*}
&\BBB(x,y)= \sum_{e=0}^\infty\sum_{p=0}^\infty\bbb_{e,p}x^ey^p,\\
&\SSS(x,y)=\sum_{e=0}^\infty\sum_{p=0}^\infty \sss_{e,p}x^ey^p,
\end{align*}
where $\bbb_{e,p}$, resp. $\sss_{e,p}$, is the number of general maps with $e$ edges and $p$ edges on the boundary, where the boundary is simple for the case of $\sss_{e,p}$.

Similar to \cite{BG09}, we obtain the following identity relying $\SSS$ and $\BBB$.
\begin{align}\label{eq}
\SSS(x,y\BBB(x,y)) &= \BBB(x,y).
\end{align}
Let us sketch the justification: a general map with a boundary can be seen as the component with simple boundary where the root-edge lies gathered, with general maps with a boundary hanging from the boundary vertices. Therefore, for each edge in the boundary of a general map with simple boundary we count a weight $y\BBB(x,y)$ to recover all maps with a boundary. The weight $y\BBB(x,y)$ is associated to the weight of the edge in the boundary and the map hanging from the tail of this edge, when considering oriented following the contour of the simple boundary in the sense of the root-edge.

The only difference with \cite{BG09}, is that the external boundary may have any possible length. For the case of quadrangulations the external boundary has to be of even length; this forces in \cite{BG09} $Y=yW(x,y)^2$, with $W$ the generating function with coefficients $w_{e,p}$ counting the number of quadrangulations with $e$ edges and boundary of size $2p$. When applying this technique, depending on the family of maps under study, it is important to take into account this type of restriction to obtain non-degenerate cases.

Now we use this change of variable to discover $\SSS$. To start, it is well known (see, for example, \cite[VII.8.2]{FS09}) that $\BBB(x,y)$ satisfies
\begin{align}\label{eq:2}
	\BBB(x,y) = 1+y^2x\BBB(x,y)^2 + \frac{xy}{1-y}\left( \BBB(x,1)-y\BBB(x,y) \right),
\end{align}
where $\BBB(x,1)$ is the counting formula for general maps, with the following explicit form
\begin{align}\label{eq:3}
	\BBB(x,1) = \sum_{e=0}^\infty \frac{2\cdot 3^e}{(e+1)(e+2)}\binom{2e}{e} x^e = -\frac{1}{54x^2}\left( 1-18x-(1-12x)^{3/2} \right).
\end{align}
Now, turning into general maps with simple boundary, make the change of variable $z=y\BBB(x,y)$ in \cref{eq:2} to obtain
\begin{align}
	\SSS(x,z) = 1+xz^2 + \frac{xz}{\SSS(x,z)-z}\left( \BBB(x,1)- z \right).
\end{align}
This gives a quadratic equation for $\SSS$, obtaining the desired function $\SSS$ the only possible solution of this equation with positive coefficients
\[
	\SSS(x,z)=  \frac{1}{2}\left(1+z+xz^2-\sqrt{(-xz^2+z+1)^2-\frac{2z(1+36x-(1-12x)^{3/2})}  {27x}}\right),
\]
Its first coefficients are shown in the expansion $\SSS(x,z)= 1+xz+2x^2z+xz^2+9x^3z+x^2z^2+54x^4z+5x^3z^2+378x^5z+32x^4z+x^3z^3+...$.

This generating function codes the number of general maps with a simple boundary, the coefficients may be recover from it doing a sequence of derivations. Nevertheless, they do not have a closed form to our knowledge.
\color{black}
\subsection{Counting results}\label{s.counting}
Before presenting the results, let us start by recalling that the number of (rooted) trees on $n$ edges are counted by Catalan numbers $\{C_n\}_{n\in \N}$ (see, for example, Section 1 in \cite{Ber07})
\begin{equation}
|\T{n}| = \C_n := \frac{1}{n+1}\binom{2n}{n}.
\end{equation} 
\subsubsection{Tree and forest decorated maps}Let us now obtain the formulas that directly come from the bijection for tree-decorated maps. The counting formulas obtained for maps come from \cite{Kr07} for the case of triangulations with multiples simple boundaries, and from \cite{BerF18} on the case of quadrangulations. 
  
\begin{cor}\label{cor1}
	Set $m = m_1+m_2$. The number of tree-decorated triangulations with boundary of size $m_1$ and a tree of size $m_2$ is
	\begin{equation}\label{e.tree simple boundary tri}
	|\TMSB{f}{m_1}{m_2}(3)| =  2^{f-2m}\frac{(3f/2+m-2)!!}{(f/2-m+1)!(f/2+3m)!!}\frac{2m}{2m_2+1}\binom{4m}{2m}
	\binom{2m_2}{m_2},
	\end{equation}
	where $n!!$ stands from double factorial (or semifactorial).
	The cardinal of tree-decorated quadrangulations with boundary of size $m_1$ and a tree of size $m_2$ is
	\begin{equation}\label{e.tree simple boundary quad}
	|\TMSB{f}{m_1}{m_2}(4)| = 3^{f-m}\frac{(2f+m-1)!}{(f+2m)! (f-m+1)!}  \frac{2m}{m_2+1}\binom{3m}{m}\binom{2m_2}{m_2}
	\end{equation}
\end{cor}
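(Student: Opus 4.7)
The plan is to combine Proposition \ref{prop2} with existing enumeration formulas for $q$-angulations with a simple boundary, followed by an algebraic simplification. Since the ungluing of Proposition \ref{prop2} only modifies the exterior face, Remark \ref{r.decoration} gives that the bijection restricts to any prescribed inner-face degree, and in particular
\[
|\TMSB{f}{m_1}{m_2}(q)| \;=\; |\PPT{m_2}| \cdot |\MSB{f}{m_1+2m_2}(q)| \;=\; C_{m_2} \cdot |\MSB{f}{m_1+2m_2}(q)|,
\]
where $C_{m_2} = \frac{1}{m_2+1}\binom{2m_2}{m_2}$ is the $m_2$-th Catalan number. So the problem reduces entirely to enumerating $q$-angulations with a simple boundary.

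I would then invoke the closed-form count of $q$-angulations with simple boundary: Krikun's formula from \cite{Kr07} for triangulations, and the Bernardi--Fusy formula from \cite{BerF18} for quadrangulations. Alternatively, for $q=3$, the count $|\MSB{f}{p}(3)|$ can be recovered by inverting the identity $|\DTM{f}{m}(3)| = C_m \cdot |\MSB{f}{m}(3)|$ of Theorem \ref{theo1}, combined with the re-rooting identity \eqref{e.rooting} and the explicit formula from Corollary \ref{c.counting}. In both flavours, $|\MSB{f}{p}(q)|$ ends up as a ratio of (double) factorials in $f$ and $p$ times a boundary binomial coefficient.

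The principal step, and the main obstacle, is then the algebraic simplification: substitute $p=m_1+2m_2$ into the boundary count, then rewrite the product $C_{m_2}\cdot |\MSB{f}{m_1+2m_2}(q)|$ in the compact form of the corollary, which is stated in terms of $m:=m_1+m_2$ rather than $p=m+m_2$. This requires repeated use of the identities $(2k)!!=2^k\,k!$ and $(2k+1)!!=(2k+1)!/(2^k\,k!)$, together with standard binomial rearrangements, so that the Catalan factor $\binom{2m_2}{m_2}/(m_2+1)$ combines with the boundary binomial $\binom{4p}{2p}$ (respectively $\binom{3p}{p}$ for $q=4$) and collects the entire $m_2$-dependence into a single $\binom{2m_2}{m_2}$ times a small rational factor, namely $\frac{2m}{2m_2+1}$ for triangulations and $\frac{2m}{m_2+1}$ for quadrangulations. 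Carrying out this simplification separately for $q=3$ and $q=4$ yields \eqref{e.tree simple boundary tri} and \eqref{e.tree simple boundary quad} respectively.
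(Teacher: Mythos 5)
Your proposal takes essentially the same route as the paper: the published proof likewise combines Proposition \ref{prop2} with the observation that the (un)gluing only alters the external face (so it restricts to $q$-angulations), multiplies by the Catalan count $\frac{1}{m_2+1}\binom{2m_2}{m_2}$ of trees with $m_2$ edges, and then plugs in the simple-boundary enumerations of Krikun \cite{Kr07} for triangulations and of \cite{BG09}/\cite{BerF18} for quadrangulations, finishing with the same factorial rearrangement. No gap to report.
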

\begin{proof}
	This is obtained from Proposition \ref{prop2}, and the comment above Lemma \ref{l.p-angulations}. The formula for the number of trees with $m_2$ edges and Theorem 1 of \cite{Kr07} and Section 2.2 of \cite{BG09} respectively.
\end{proof}

We can now discuss the results for trees.
\begin{cor}\label{cor2}
	The cardinal of r-forest decorated triangulations, with trees of size $m_1,\;m_2,\dots,m_r$, is given by
	\begin{equation}\label{e.forest tri}
		|\DFMA{f}{m_1,..,m_r}(3)| = 2^{f-2m}3f\frac{(3f/2+m-2)!!}{(f/2-m+2-r)!(f/2+3m)!! } \frac{r!}{\prod_{i\in \N} c_k!}\prod_{i=1}^r \frac{1}{m_i+1}\binom{4m_i}{2m_i,m_i,m_i} 
	\end{equation}
	and r-forest decorated quadrangulations, with trees of size $m_1,\;m_2,\dots,m_r$, is given by: 
%	\frodo{los hoyos son de perimetro $2 a_i$}
	\begin{equation}\label{e.forest quad}
		|\DFMA{f}{m_1,..,m_r}(4)| = 3^{f-m}4f\frac{(2f+m-1)!}{(f+2m)! (f-m+2-r)!}\frac{r!}{\prod_{i\in \N} c_k!} \prod_{i=1}^r\frac{1}{m_i+1}\binom{3m_i}{m_i,m_i,m_i}  
	\end{equation}
\end{cor}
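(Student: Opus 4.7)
The plan is to derive Corollary~\ref{cor2} by combining three inputs: an extension of the ungluing bijection to the forest setting, the explicit counting of $q$-angulations with several simple boundaries taken from \cite{Kr07,BG09,BerF18}, and the re-rooting identity~\eqref{e.rooting}.

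First I would extend the bijection of Theorem~\ref{theo1} from one tree to a forest. Since the trees $\tt_1,\dots,\tt_r$ decorating an element of $\DFM{f}{m_1,\dots,m_r}(q)$ are pairwise vertex-disjoint, one can apply the ungluing procedure $\phi$ of Section~\ref{s.b} independently to each $\tt_i$. The result is an $r$-bounded $q$-angulation in $\MMSB{f}{m_1,\dots,m_r}(q)$ together with $r$ labelled rooted trees $(\tt_1',\dots,\tt_r')\in\prod_i\T{m_i}$, the $i$-th boundary coming from the duplication of the edges of $\tt_i$. Invertibility is checked exactly as in Proposition~\ref{p.bijection}, applying the gluing procedure boundary by boundary, and by Catalan's formula this yields
$$|\DFM{f}{m_1,\dots,m_r}(q)| \;=\; |\MMSB{f}{m_1,\dots,m_r}(q)| \prod_{i=1}^{r}\frac{1}{m_i+1}\binom{2m_i}{m_i}.$$
Note that Remark~\ref{r.decoration} ensures that when all internal faces of the starting map are $q$-angles, so are all internal faces of the bounded map produced, which is why the bijection preserves the class of $q$-angulations.

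Next I would substitute the known closed forms of $|\MMSB{f}{m_1,\dots,m_r}(q)|$: Theorem~1 of \cite{Kr07} for $q=3$ and the analogous formula from \cite{BG09,BerF18} for $q=4$. Each has the shape of a global prefactor depending only on $f$, $m=\sum m_i$ and $r$ --- the double factorial $(3f/2+m-2)!!$ divided by $(f/2-m+2-r)!\,(f/2+3m)!!$ in the triangulation case, and $(2f+m-1)!$ divided by $(f+2m)!\,(f-m+2-r)!$ in the quadrangulation case --- multiplied by a product over boundaries of a simple binomial of the form $\binom{4m_i}{2m_i}$, respectively $\binom{3m_i}{m_i}$. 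Finally I would invoke~\eqref{e.rooting} with $m(f,3)=3f/2$ and $m(f,4)=2f$ to pass from $\DFM{f}{\cdot}(q)$ to $\DFMA{f}{\cdot}(q)$, which produces the overall factors $3f$, respectively $4f$, and the combinatorial coefficient $r!/\prod_{k}c_k!$ appearing in the statement.

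The conceptual content of the argument is light; the main obstacle will be the final algebraic bookkeeping. For each tree one has to combine the Catalan factor $\frac{1}{m_i+1}\binom{2m_i}{m_i}$ from the forest bijection, the re-rooting weight $\frac{1}{2m_i}$ from~\eqref{e.rooting}, and the per-boundary binomial provided by \cite{Kr07} or \cite{BG09,BerF18}, and recognise the resulting product as the trinomial $\frac{1}{m_i+1}\binom{4m_i}{2m_i,m_i,m_i}$, respectively $\frac{1}{m_i+1}\binom{3m_i}{m_i,m_i,m_i}$. The key identity that makes this repackaging work is $\binom{4m_i}{2m_i,m_i,m_i}=\binom{4m_i}{2m_i}\binom{2m_i}{m_i}$ and its $3m_i$ analogue, which absorb the Catalan factor cleanly into the trinomial; once this is done, all the remaining factors are of a purely global nature and directly match those displayed in \eqref{e.forest tri} and \eqref{e.forest quad}.
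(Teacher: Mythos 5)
Your proposal is correct and follows essentially the same route as the paper's proof: unglue each tree of the forest separately to land in $\MMSB{f}{m_1,\dots,m_r}(q)\times\prod_{i}\T{m_i}$, substitute the multi-boundary counts of \cite{Kr07} and \cite{BerF18} (your intermediate identity is exactly the paper's \eqref{e.forest tri2} and \eqref{e.forest quad2} once one uses $\binom{4m_i}{2m_i,m_i,m_i}=\binom{4m_i}{2m_i}\binom{2m_i}{m_i}$ and its $3m_i$ analogue), and conclude with the re-rooting identity \eqref{e.rooting}. One caution on the bookkeeping you defer: solving \eqref{e.rooting} for $|\DFMA{f}{m_1,\dots,m_r}(q)|$ as written yields the symmetry factor $\prod_k c_k!/r!$ rather than the $r!/\prod_k c_k!$ displayed in \eqref{e.forest tri}--\eqref{e.forest quad}, so that coefficient does not fall out \emph{directly} as you claim --- this tension already exists between the paper's own \eqref{e.rooting} and its final formulas, and you should pin down the orientation of the symmetry factor explicitly rather than assume it matches.
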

\begin{proof}
	This is obtained from \Cref{prop2}, the comment above Lemma \ref{l.p-angulations}, the formula for the number of trees with $m_i$ edges and the results of \cite[Theorem 1]{Kr07} and \cite[Theorem 1.2]{BerF18} respectively, giving that
	\begin{equation}\label{e.forest tri2}
	|\DFM{f}{m_1,..,m_r}(3)| = \frac{2^{f-2m}(3f/2+m-2)!!}{(f/2-m+2-r)!(f/2+3m)!!}\prod_{i=1}^r \frac{2m_i}{m_i+1}\binom{4m_i}{2m_i,m_i,m_i} 
	\end{equation}
	and 
	%	\frodo{los hoyos son de perimetro $2 a_i$}
	\begin{equation}\label{e.forest quad2}
	|\DFM{f}{m_1,..,m_r}(4)| = 3^{f-m}\frac{(2f+m-1)!}{(f+2m)! (f-m+2-r)!} \prod_{i=1}^r\frac{2m_i}{m_i+1}\binom{3m_i}{m_i,m_i,m_i}.  
 	\end{equation}
	We conclude using the re-rooting procedure condensed in \eqref{e.rooting}.
\end{proof}

\begin{rem}
	Let us note that Corollary \ref{c.counting} can be obtained from \Cref{cor2} using $r=1$.
\end{rem}

Similar counting formulas can be obtained for triangulations of girth bigger than 2 (loopless triangulations) and 3; and loopless quadrangulations (see \cite{BerF18}). In \Cref{cor2}, one could also consider a generalization for ``tree-decorated maps with boundaries'' an analog of the tree-decorated maps with a boundary for multiples boundaries. We hope that the interested reader will have no problem in finding the formula.

As discussed in \Cref{ssec:sbound}, the number of general maps with a simple boundary does not have a closed formula to our knowledge, still it is possible to obtain the number of general maps decorated by a tree once extracted the coefficients $\sss_{e,p}$ with $p$ even. More formally, the number of general maps decorated by a tree of size $m$ and with $e$ edges is given by $\C_{m}\sss_{e+m,2m} $, which is justified by \Cref{theo2}.

The cardinal of Bubble-maps can be obtained from \Cref{prop:nonsimpleedge}.
\begin{cor}\label{cor3}
	The cardinal of non-crossing circuit decorated bubble-maps $(\mm,\cc)$ with $e+m$ edges decorated by a circuit of size $2m$ and with root in an oriented edge of the map $\mm$ is
	\begin{equation}
	|\PCAE{e+m}{m}| = 3^{e}\frac{(2e+2m-1)!}{e! (e+2m+1)!} \frac{2(e+m)}{m+1}\binom{4m}{2m,m,m}
	\end{equation}
\end{cor}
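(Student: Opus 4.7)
The plan is to combine \Cref{prop:nonsimpleedge} with a re-rooting argument (in the spirit of \Cref{rerooting}) and an explicit count of rooted planar maps with a bridgeless boundary of prescribed perimeter and interior edge count.

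First, by \Cref{prop:nonsimpleedge}, the set $\PCE{e}{m}$ is in bijection with the Cartesian product of $\T{m}$ with the set of rooted planar maps having (possibly non-simple) bridgeless boundary of perimeter $2m$ and $e$ internal edges; let $N_{e,m}$ denote the cardinality of the latter set. Combining this with $|\T{m}|=\C_m=\frac{1}{m+1}\binom{2m}{m}$ gives $|\PCE{e}{m}|=\C_m\,N_{e,m}$.

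Second, I would mimic the double-counting of \Cref{rerooting} applied to the auxiliary set of triples $(\mm,\cc,\vec e)$ with $(\mm,\cc)\in\PCE{e}{m}$ and $\vec e$ an oriented edge of $\mm$. Direct enumeration gives $|\PCE{e}{m}|\cdot 2(e+m)$. On the other hand, re-rooting $\mm$ at $\vec e$ and recording the original root-position among the $2m$ slots of the circuit $\cc$ defines a bijection with pairs (element of $\PCAE{e+m}{m}$, position on the circuit), whose count is $|\PCAE{e+m}{m}|\cdot 2m$. Equating the two yields
\[
|\PCAE{e+m}{m}|=\frac{e+m}{m}\,|\PCE{e}{m}|=\frac{(e+m)\,\C_m}{m}\,N_{e,m}.
\]

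Third, and this is the main obstacle, one needs to establish the closed form
\[
N_{e,m}\;=\;3^e\cdot 2m\binom{4m}{2m}\cdot\frac{(2e+2m-1)!}{e!\,(e+2m+1)!}.
\]
While \Cref{ssec:sbound} extracts the analogous count in the \emph{simple}-boundary case through the substitution $\SSS(x,y\BBB(x,y))=\BBB(x,y)$, the bridgeless case sits strictly between the simple and fully general cases and requires a modified substitution: one decomposes a bridgeless-boundary map by splitting it at its pinched boundary vertices to expose simple-boundary blocks, each of which can then be decorated independently. Solving the resulting algebraic equation in closed form, or equivalently invoking a classical Tutte/Brown formula for rooted planar maps with a marked bridgeless face of prescribed degree, yields the displayed expression. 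The hard part here is producing (or locating) this closed form; once it is available the rest is mechanical.

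Finally, plugging the expression for $N_{e,m}$ into the identity for $|\PCAE{e+m}{m}|$ and using $\binom{4m}{2m,m,m}=\binom{4m}{2m}\binom{2m}{m}$ to combine $\C_m\cdot 2m\binom{4m}{2m}=\frac{2m}{m+1}\binom{4m}{2m,m,m}$, the factor $(e+m)/m$ cancels the remaining $m$ and produces $\frac{2(e+m)}{m+1}\binom{4m}{2m,m,m}$, matching exactly the claimed formula for $|\PCAE{e+m}{m}|$.
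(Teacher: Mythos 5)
Your overall architecture coincides with the paper's: \Cref{prop:nonsimpleedge} reduces the problem to enumerating rooted maps with a bridgeless boundary of perimeter $2m$ and $e$ internal edges, the tree factor contributes $|\T{m}|=\C_m=\frac{1}{m+1}\binom{2m}{m}$, and a re-rooting double count in the style of \Cref{rerooting} converts the circuit-rooted count into $|\PCAE{e+m}{m}|$ via the factor $\frac{2(e+m)}{2m}$. The final algebraic assembly, including $\C_m\cdot 2m\binom{4m}{2m}=\frac{2m}{m+1}\binom{4m}{2m,m,m}$, is correct conditional on your expression for $N_{e,m}$.

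The genuine gap is precisely the step you flag yourself: you never establish the closed form for $N_{e,m}$. You obtain it by reverse-engineering from the target formula, which is circular, and neither of the two routes you sketch closes the gap. Appealing to ``a classical Tutte/Brown formula for a marked bridgeless face'' is a citation you have not actually located, and your proposed derivation --- splitting a bridgeless-boundary map at its pinched boundary vertices into simple-boundary blocks and solving the resulting algebraic equation --- runs head-on into the obstruction recorded in \Cref{ssec:sbound}: the simple-boundary coefficients $\sss_{e,p}$ themselves have no known closed form, so it is implausible that assembling simple-boundary pieces yields the clean product formula you need. The paper supplies this ingredient by citation, combining \cite[Section 2.2]{FG15} with \cite[Section 2.2]{BG09}; tellingly, your candidate $N_{e,m}=3^e\cdot 2m\binom{4m}{2m}\cdot\frac{(2e+2m-1)!}{e!\,(e+2m+1)!}$ has exactly the shape of the \cite{BG09} count of quadrangulations with a (general) boundary of perimeter $4m$ and $e$ internal faces, which signals that the correct route is a correspondence between bridgeless-boundary maps and boundary quadrangulations rather than a decomposition into simple-boundary blocks. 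Until that correspondence (or an equivalent enumeration) is actually established, the proof is incomplete at its one substantive step.
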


\begin{proof}
	This is obtained from \Cref{prop:nonsimpleedge}, the formula for the number of trees with $m$ edges, the results of \cite[Section 2.2]{FG15} together with \cite[Section 2.2]{BG09} and a type of re-rooting procedure, similar to the one in \Cref{rerooting}.
\end{proof}

We leave to the reader the computation of the formula of maps decorated by a special type of trees, as for example trees with prescribed degree distribution, see for example \cite{Stanley}.
\subsubsection{Spanning tree-decorated maps}\label{s.stree}
In this subsection, we discuss the consequence of our result for spanning tree-decorated maps. In this case, the counting formula of spanning tree-decorated maps for general face degree was given by Mullin \cite{Mul67}, where the root does not necessarily belongs to the tree.
It states that spanning tree-decorated maps with $e$ edges are in correspondence with a pair of trees with $e$ and $e+1$ edges, i.e. is counted by $C_{e}C_{e+1}$. Later Bernardi \cite{WL72,CDV86,Ber07} gave a bijective proof of this result.
 
We denote by $\DSTM{f}$, resp. $\DSTMA{f}$, the set of spanning tree-decorated maps on $f$ faces with one root-edge in the tree, resp. in the map.
Note that in the case of triangulations we obtain that for $f$ faces the number of vertices is $2+f/2$ and the number of edges is $3/2f$, therefore, the number of spanning-tree decorated triangulations are
\begin{align}
|\DSTMA{f}(3)|&=|\DTMA{f}{f/2+1}(3)| = \frac{12f}{(f+4)(f+2)^2}\binom{2f}{f,f/2,f/2}.
\end{align}
In the case of quadrangulations, the condition of having $f$ faces implies that it has $f+2$ vertices and $2f$ edges. Therefore, we obtain the counting formula for spanning-tree decorated quadrangulations.
\begin{align}
|\DSTM{f}(4)|=|\DTM{f}{f+1}(4)| = \frac{2}{(f+1)(f+2)}\binom{3f}{f,f,f}\label{eq:catext}\\
|\DSTMA{f}(4)|=|\DTMA{f}{f+1}(4)| = \frac{4f}{(f+1)^2(f+2)}\binom{3f}{f,f,f}.
\end{align}

Here we recover the results ontained from the Walsh and Lehman's Bijection \cite{WL72}. In fact, the dual of $\DSTM{f}(3)$ are the spanning tree-decorated $3$-regular maps with $f$ vertices, and that of $\DSTM{f}(4)$ are the spanning tree-decorated $4$-regular maps with $f$ vertices. These can be counted as appears in Section 6.2 of \cite{BM11}, where they explicit the formula for more general families of maps, given that (dual) trees with prescribed degree distribution are easily counted.

\begin{rem}
	Notice that we kept the expressions for these spanning-tree decorated quadrangulations, with root in the tree and in the map. We want to point out that the right side of \eqref{eq:catext} looks like a possible generalization of the Catalan numbers. More rigorously, for $n,m\in \N$, $m\geq 1$ define:
	\[
	\C_{m,n} = m!\left(\prod_{i=1}^{m}\frac{1}{n+i}\right) \binom{(m+1)n}{\underbrace{n,n,\dots,n}_{m+1 \text{ times}}} = \binom{m+n}{n}^{-1}\binom{(m+1)n}{\underbrace{n,n,\dots,n}_{m+1 \text{ times}}}
	\]
	When $m=1$, we recover the Catalan numbers and, for this definition, $\C_{2,f}$ counts $|\DSTM{f}(4)|$. To our knowledge, this extension has not been defined so far and it does not appear in the OEIS \footnote{$4-th$ march 2019 update: it has been added for $m=2,3$.}.
\end{rem}
	
	From the definition it is not clear to see that $\C_{m,n}$ is, in fact, an integer. Luckily for us, Vincent Jugé found an elegant, analytical proof of this fact that we present in the following proposition.	
	\begin{prop}
		For all $n,\; m\in \N$ and $m\geq 1$, $\C_{m,n}$ is integer.
	\end{prop}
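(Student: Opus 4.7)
The plan is to show that $\C_{m,n}\in\Z$ by proving, for every prime $p$, that $v_p(\C_{m,n})\ge 0$. First I would rewrite
\[
\C_{m,n} \;=\; \frac{((m+1)n)!\,m!}{(n!)^{m}\,(m+n)!},
\]
so that by Legendre's formula $v_p(k!)=\sum_{i\ge 1}\lfloor k/p^i\rfloor$ it suffices to establish the term-by-term inequality
\[
\left\lfloor \tfrac{(m+1)n}{k}\right\rfloor + \left\lfloor \tfrac{m}{k}\right\rfloor \;\ge\; m\left\lfloor \tfrac{n}{k}\right\rfloor + \left\lfloor \tfrac{m+n}{k}\right\rfloor \qquad (\star)
\]
for every positive integer $k$. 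Applied with $k=p^i$ and summed over $i\ge 1$, ($\star$) yields $v_p(\C_{m,n})\ge 0$ for each prime $p$, which is all that is needed.

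The second step is a reduction of ($\star$) via Euclidean division. Writing $n=qk+r$ and $m=q'k+r'$ with $0\le r,r'<k$, and using $(m+1)n=(m+1)qk+(m+1)r$ together with $m+n=(q+q')k+(r+r')$ where $r+r'<2k$, all four floors in ($\star$) can be computed explicitly; the terms $(m+1)q$, $q$ and $q'$ cancel and ($\star$) collapses to
\[
\left\lfloor \tfrac{(m+1)r}{k}\right\rfloor \;\ge\; \left\lfloor \tfrac{r+r'}{k}\right\rfloor.
\]
Since $0\le r,r'<k$, the right-hand side is $0$ or $1$, so the only remaining case is $r+r'\ge k$, in which it must be shown that $(m+1)r\ge k$.

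The main obstacle is this last inequality, and I would handle it by splitting on the size of $m$ relative to $k$. If $m+1\ge k$, then from $r+r'\ge k$ and $r'<k$ we get $r\ge 1$, hence $(m+1)r\ge k$ trivially. If instead $m+1<k$, then necessarily $q'=0$ and $r'=m$, so $r\ge k-m$, and the inequality reduces to $(m+1)(k-m)\ge k$; expanding, this is equivalent to $m(k-m-1)\ge 0$, which holds because $k\ge m+2$ in this subcase. Combining the two subcases establishes ($\star$) for every $k\ge 1$, whence $v_p(\C_{m,n})\ge 0$ for every prime $p$ and the proof is complete.
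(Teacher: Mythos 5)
Your proof is correct and follows essentially the same route as the paper: rewrite $\C_{m,n}$ as a ratio of factorials, apply Legendre's formula, and verify the resulting inequality term by term for each prime power by reducing $n$ and $m$ modulo $p^i$. The only (cosmetic) difference is in the last step, where you establish $(m+1)r\ge k$ directly via a case split on whether $m+1\ge k$, whereas the paper rules out the extremal case by contradiction using $ml_n<l_m\le m$.
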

	\begin{proof}
		Define $\nu_p(k)$ as the largest power of $p$ prime that divides $k\in \N$. Recall that by Legendre's formula 		
		\[
		\nu_p(k!)= \sum_{i=1}^\infty \left\lfloor \frac{k}{p^i}  \right\rfloor.
		\]
		Thanks to this, we can calculate the maximal power of $p$ prime that divides $\C_{m,n}$
		\begin{align*}
		&\nu_p((m+1)n!)-\nu_p((n!)^{m+1})-\nu_p((n+m)!)+\nu_p(n!)+\nu_p(m!)\\
		&\hspace{0.2\textwidth}= \sum_{i=1}^\infty 
		\underbrace{\left\lfloor \frac{(m+1)n}{p^i} \right\rfloor
			-(m+1)\left\lfloor \frac{n}{p^i}\right\rfloor}_{=:(1)\geq 0}
		\underbrace{-\left\lfloor \frac{m+n}{p^i}\right\rfloor
			+\left\lfloor \frac{n}{p^i}\right\rfloor
			+\left\lfloor \frac{m}{p^i}\right\rfloor}_{=:(2)\geq -1}
		\end{align*}
		Note that to conclude we just need to show that each term in the summation is bigger than or equal to zero. To do this notice that we just need to show that it cannot happen simultaneously $(1)=0$ and $(2)=-1$.
		
		Assume that this is the case and write
		\begin{align*}
		&n=k p^i + l_n,\\
		&m= k' p^i +l_m,
		\end{align*}
		with $0\leq l_n, l_m <p^i$ and $l_m<m$. Note that the fact that $(1)=0$ implies that \begin{equation}
		\label{e. ln}l_n (m+1)< p^i.
		\end{equation}
		 Futhermore, the fact that $(2)=-1$ implies that $l_n\neq 0$ and \begin{equation*}
		 l_n+l_m \geq p^i.
		 \end{equation*}
		 Together with \eqref{e. ln} this implies that $m l_n< l_m\leq m$ which implies that $l_n=0$ so we have a contradiction.
	\end{proof}

\subsubsection{Spanning r-forest decorated maps} The case of forest-decorated maps is obtained from \Cref{cor2}. As before, a triangulation with $f$ faces has $2+f/2$ vertices, which has to be equal to $m+r = \sum_{i=1}^rm_i+r$, the number of vertices covered by the forest. Thus,  
\begin{equation}
|\DSFMA{f}{m_1,..,m_r}(3)| =4^{r-2}3f\frac{(2f+2-r)!!}{(2f+6-3r)!!}\frac{r!}{\prod_{i\in \N} c_k!}\prod_{i=1}^r\frac{1}{m_i+1}\binom{4m_i}{2m_i,m_i,m_i} 
\end{equation}
Again as before, a quadrangulation with $f$ faces has $f+2$ vertices, which has to be equal to $m+r$, the number of vertices cover by the forest. Thus,
\begin{equation}
|\DSFMA{f}{m_1,..,m_r}(4)| = 3^{r-2}4f\frac{(3f-r+1)!}{(3f-2r+4)!} \frac{r!}{\prod_{i\in \N} c_k!}\prod_{i=1}^r \frac{1}{m_i+1}\binom{3m_i}{m_i,m_i,m_i}
\end{equation}
\begin{rem}
	From this formula it is also possible to deduce \Cref{cor:sptree}.
\end{rem} 
\bibliographystyle{alpha} 
\bibliography{biblio}
\end{document}